\newcommand{\N}{\mathbb{N}}
\newcommand{\pa}{\partial}
\newcommand{\ve}{\varepsilon}
\newcommand{\vp}{\varphi}
\newcommand{\md}{\mathrm{d}}
\newcommand{\vpa}{\varphi_\ast}
\newcommand{\fa}{f_\ast}
\newcommand{\tf}{\tilde{f}}
\newcommand{\fd}{f^{\downarrow}}
\newcommand{\fad}{f_*^{\downarrow}}
\newcommand{\ha}{h_*}
\newcommand{\D}{\mathcal{D}}
\newcommand{\T}{\mathbb{T}^1}
\newcommand{\h}{\mathcal{H}}
\newcommand{\supp}{\operatorname{supp}}
\newcommand{\xa}{x_{\ast}}
\newcommand{\ms}{\mathcal{S}}
\newcommand{\mx}{x^{\downarrow}}
\newcommand{\mxa}{x_*^\downarrow}
\newcommand{\mk}{\mathcal{K}}
\newcommand{\mt}{\mathcal{T}}
\newcommand{\mM}{\mathcal{M}}
\newtheorem{theorem}{Theorem}
\newtheorem*{theorem*}{Theorem}
\newtheorem{lemma}[theorem]{Lemma}
\newtheorem{proposition}[theorem]{Proposition}
\newtheorem{cor}[theorem]{Corollary}
\newtheorem{rem}{Remark}
\newtheorem{Def}{Definition}     
\begin{document}
	\title{Reversal Collision Dynamics}
	\author{A. Frouvelle\thanks{CEREMADE, UMR 7534, Universite Paris–Dauphine, Place du Marechal de Lattre de Tassigny, 75775 Paris Cedex 16, France. {\tt frouvelle@ceremade.dauphine.fr}} \and L. Kanzler\thanks{CEREMADE, UMR 7534, Universite Paris–Dauphine, Place du Marechal de Lattre de Tassigny, 75775 Paris Cedex 16, France. {\tt laura.kanzler@dauphine.psl.eu}} \and C. Schmeiser\thanks{University of Vienna, Faculty for Mathematics, Oskar-Morgenstern-Platz 1, 1090 Wien, Austria. 
			{\tt christian.schmeiser@univie.ac.at}}}
	\date{}

	\maketitle

	\begin{abstract}
	Motivated by the study of reversal behaviour of myxobacteria, in this article we are interested in a kinetic model for reversal dynamics, in which particles with directions close to be opposite undergo binary collision resulting in reversing their orientations. To this aim, a generic model for binary collisions between particles with states in a general metric space exhibiting specific symmetry properties is proposed and investigated. The reversal process is given by an involution on the space, and the rate of collision is only supposed to be bounded and lower semi-continuous. 
	We prove existence and uniqueness of measure solutions as well as their convergence to equilibrium, using the graph-theoretical notion of connectivity. We first characterise the shape of equilibria in terms of connected components of a graph on the state space, which can be associated to the initial data of the problem. Strengthening the notion of connectivity on subsets for which the rate of convergence is bounded below, we then show exponential convergence towards the unique steady-state associated to the initial condition. The article is concluded with numerical simulations set on the one-dimensional torus giving evidence to the analytical results. 
	\end{abstract}
	
	\medskip 
	
	\begin{keywords}
		Reversal collisions, decay to equilibrium, entropy
	\end{keywords}

	\medskip
	
	\textbf{\textit{AMS subject classification:}} 35Q70, 35B40, 82B20
	
	\medskip
	
	{\it Acknowledgements:} L.K. and C.S. acknowledge support from the Austrian Science Fund, grants no. W1245 and F65, as well as the ÖAD, mobility grant FR 01/201. L.K. received funding by a grant from the FORMAL team at ISCD - Sorbonne Université.
    A.F. acknowledges support from the Project EFI ANR-17-CE40-0030 of the French National Research Agency. A.F. thanks the hospitality of the Laboratoire de Mathématiques et Applications (LMA, CNRS) in the Université de Poitiers, where part of this research was conducted.
	
	\section{Introduction}\label{intro}
	The motivation of this paper comes from the kinetic equation investigated in \cite{HKMS}, describing a two-dimensional model of myxobacteria updating their direction according to mechanisms of alignment and \emph{reversal collision} with their collision partner: two bacteria with roughly opposite direction of movement may chose to reverse their orientation. We want to study the spatially homogeneous version of this model restricted to reversal collisions only, that we describe now.
	
	We denote by~$\T$ the one-dimensional torus of length~$2\pi$, and for~$\varphi\in\T$ (representing an angle modulo $2\pi$) we write~$\varphi^\downarrow=\varphi+\pi$, the opposite angle. We denote by $b(\varphi,\varphi_*)$ the rate at which particles with angles $\varphi$ and~$\varphi_*$ undergo a \emph{reversal collision} to become, respectively, $\varphi^\downarrow$ and~$\varphi_*^\downarrow$. We assume that the particles are indistinguishable, and therefore~$b(\varphi,\varphi_*)=b(\varphi_*,\varphi)$. As in~\cite{HKMS}, we denote by $d(\cdot,\cdot)$ the distance between two angles on the torus and we assume that two particles only collide when the angle between their orientations is greater than~$\frac{\pi}{2}$, i.e.~$b(\vp,\vpa)= 0$ for $d(\vp,\vpa)\leqslant\pi/2$. We also assume that the collision kernel~$b$ is symmetric with respect to the reversal process: for all angles~$\varphi$ and~$\varphi_*$, we have~$b(\varphi,\varphi*)=b(\varphi^\downarrow,\varphi^\downarrow_*)$.
	The dynamics is then given by the following kinetic equation, describing the evolution in time of a distribution function~$f=f(t,\varphi)\geqslant 0$:
	\begin{align}\label{c4:REVT}
		\pa_t f =  \int_{\T} b(\vp,\vpa)\left(\fd \fad -f\fa\right)\, \md \vpa,
	\end{align}
	where as usual in kinetic theory, we use the notation $\fd=f(t,\varphi^\downarrow)$ and $\fa=f(t,\varphi_*)$. This equation preserves mass, and therefore we expect~$f$ to be a probability density for all time if its initial condition is of mass~$1$. 
	
	In~\cite{HKMS} it was shown that under the assumption that the support of the initial condition~$f_I$ can be separated into two not connected areas, whose distance from each other is at least $\pi/2$, the mass restricted to each of these areas is conserved within time. As a consequence, convergence to an equilibrium with asymmetrically distributed mass in the upper and lower half of the torus $\T$ was observed. On the other hand, if the initial condition does not exhibit such a separation of the support, the equilibrium distribution is proved to be always symmetric.  Additionally, in~\cite{KS} the aforesaid model was investigated with an additional term modelling diffusion with respect to $\vp$. In that case, independent on the initial data, no equilibria with asymmetrically distributed masses in the upper and lower part of the torus could be created. This observation indicated that the special distribution of the support, which is in the latter case destroyed by the diffusive term as time proceeds, is responsible for the final form of the equilibrium the system converges to. These results lead to the fundamental question for this novel type of equation modelling reversal collision dynamics: \emph{How exactly does the support of the initial data encode the shape of the equilibrium?}
	
	Motivated by the aforementioned dynamics on the torus, this question can be posed in a far more general setting. The goal is first to be able to allow for different geometries, for instance if we consider the directions of the particles to be on the unit sphere of~$\mathbb{R}^d$, undergoing reversal collisions whenever they are close to be opposite, say by a defect angle~$\alpha$. Furthermore, we want to look at measure solutions, where the mass may be concentrated at some points. 
	
	We consider a \emph{compact metric space} $\ms$, endowed with its Borel~$\sigma$-algebra (denoted by~$\sigma(\ms)$ when needed), and with a measurable \emph{involution}~$x\in\ms\mapsto\mx$ (that is to say $(\mx)^\downarrow=x$ for all~$x \in \ms$). 
	If~$A$ is a subset of~$\ms$, we denote its \emph{reversed} set by~$A^\downarrow:=\{x^\downarrow,x\in A\}$. We suppose we are given a measurable and nonnegative \emph{collision kernel} $b:\ms\times\ms\to\mathbb{R}_+$, symmetric and invariant by the involution, in the sense that for all~$x,\xa\in\ms$, we have
	\begin{equation}\label{symmetry-b}
		b(x, \xa) = b(\xa, x)=b(\mx,\mxa) \,.
	\end{equation}
	The set of nonnegative bounded measures on $\ms$ will be denoted by $\mM_{\ms}$ and its members will be called \emph{measures on} $\ms$.
	
	Given an initial datum~$f_I\in\mM_{\ms}$ (we do not suppose a priori normalization of the initial mass), we are then interested in the evolution of a time-dependent measure~$f(t)\in\mM_{\ms}$ (which we sometimes, in an abuse of notation, formally identify with its density $f(t,x)$, $x\in\ms$), undergoing reversal collisions
	\[(x,\xa) \longrightarrow (\mx,\mxa),\]
	with interaction rate given by~$b(x,\xa)\geqslant0$. 
	
	\begin{Def}\label{def-rev-coll-measure}
		We say that~$f \in C([0,T); \mathcal{M}_{\ms})$ is, on the time interval $[0,T)$, a solution of the reversal collision dynamics on $\ms$ with initial condition~$f_I$, if for any Borel set~$A\in\sigma(\ms)$ the integral~$\int_A \md f(t)$ is a continuous function of time with initial value~$\int_A \md f_I$, differentiable on~$(0,T)$ and satisfying 
		\begin{equation}
			\label{eq-rev-coll-measure}
			\frac{\md}{\md t}\int_A \md f = \iint_{A^\downarrow\times\ms} b(x,\xa) \md f\, \md \fa -\iint_{A\times\ms} b(x,\xa) \md f\, \md \fa \, \text{ on }(0,T). 
		\end{equation}
	\end{Def}
	
	The main object of this paper is to characterize the long-time behaviour of solutions of the reversal collision dynamics~\eqref{eq-rev-coll-measure}, according to connectivity properties of the support of the initial condition~$f_I$. Our main result, Theorem~\ref{thm-asymptotic-behaviour}, gives that under minimal assumptions the solution converges exponentially fast to a steady-state which can be easily described from the initial condition~$f_I$. A simple corollary of this result in the case where~$b(x,x^\downarrow)>0$ for all~$x\in\ms$ as in our motivating example reads as follows (this is a reformulation of Corollary~\ref{corollary-main-theorem}):
	\begin{theorem*}
		Let~$f_I$ be a probability measure on~$\ms$ and~$\mu=\frac12(f_I+f_I^\downarrow)$. We suppose~$b$ is lower-semicontinuous and bounded, and such that~$b(x,x^\downarrow)>0$ for all~$x\in\ms$. Then there exists a unique global in time solution~$f(t,\cdot)$ regarding Def. \ref{def-rev-coll-measure} to the reversal collision dynamics, and there exists a finite number of sets~$\mt_i$ such that $\bigcup_i \mt_i =  \supp(\mu)$ (only depending on~$b$ and~$\mu$) which are compact and such that for all~$i$,
		\begin{itemize}
			\item either~$\mt_i\neq\mt_i^\downarrow:=\{x^\downarrow:\, x\in \mt_i\}$ and~$f$ converges exponentially fast (in total variation distance) to~$(1+\eta_i)\mu$ on~$\mt_i$ and~$(1-\eta_i)\mu$ on~$\mt_i^\downarrow$, where the constant~$\eta_i\in[-1,1]$ and is given by~$\eta_i=\frac{\int_{\mt_i}\md f_I}{\int_{\mt_i} \md \mu}-1$,
			\item or~$\mt_i=\mt_i^\downarrow$ and in that case~$f$ converges exponentially fast (in total variation distance) to~$\mu$ on~$\mt_i$.
		\end{itemize}
		Furthermore, the rate of convergence only depends on~$b$ and~$\mu$, and~$f$ is zero outside the sets~$\mt_i$ for all time.
	\end{theorem*}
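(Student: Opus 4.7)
The plan is to reduce the problem to a linear one by exploiting the reversal symmetry of $b$. First I would show that the symmetrisation $\mu:=\frac12(f+f^\downarrow)$ is conserved by the dynamics: applying Def.~\ref{def-rev-coll-measure} to a Borel set $A$ and to $A^\downarrow$, summing, and using the invariance $b(x,\xa)=b(\mx,\mxa)$ through a change of variables $x\mapsto\mx$, the right-hand side of the evolution equation for $\int_A\md\mu$ collapses to zero. Hence $\mu(t)=\mu_I$ for all $t$, and since $f\leqslant f+f^\downarrow=2\mu$, the measure $f(t)$ is absolutely continuous with respect to $\mu$ with density in $[0,2]$; in particular $\supp f(t)\subseteq\supp\mu$ for all times, which already yields the last assertion of the theorem. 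Writing $g:=\frac12(f-f^\downarrow)$, the problem reduces to tracking the evolution of $g$ alone.

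Next I would plug $f=\mu+g$ and $f^\downarrow=\mu-g$ into the right-hand side of~\eqref{eq-rev-coll-measure}: the contributions $\md\mu\otimes\md\mu$ and $\md g\otimes\md g$ cancel by the same symmetrisation argument (using $\mu^\downarrow=\mu$ and $g^\downarrow=-g$), and only the cross terms survive. This produces a \emph{linear} equation; written in terms of the density $h:=\md g/\md\mu\in[-1,1]$, it reads
\[\partial_t h(x)=-2\int_\ms b(x,\xa)\bigl(h(x)+h(\xa)\bigr)\md\mu_*.\]
A direct computation using the symmetry of $b$ then gives the $H$-theorem
\[\frac{\md}{\md t}\int_\ms h^2\md\mu=-2\iint_{\ms\times\ms}b(x,\xa)\bigl(h(x)+h(\xa)\bigr)^2\md\mu\,\md\mu_*\leqslant0.\]

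I would then analyse the graph $G$ on $\supp\mu$ whose edges are pairs $(x,y)\in\supp b$. Lower semicontinuity of $b$ makes $\{b>0\}$ open, and $b(x,\mx)>0$ gives an open neighbourhood of each $(x,\mx)$ on which $b$ is bounded below; by compactness of $\supp\mu$ this forces $G$ to have finitely many components, each clopen in $\supp\mu$. The sets $\mt_i$ of the statement would be obtained by further splitting each bipartite component into its two colour classes $P^+,P^-$ (with $P^-=(P^+)^\downarrow$, both still clopen by the same argument), leaving non-bipartite components as single symmetric sets $\mt_i=\mt_i^\downarrow$. A direct cancellation using the $b$-symmetry and the bipartite structure would then show that each $\int_{\mt_i}\md f$ is time-invariant, pinning down the coefficient $\eta_i$ to the claimed value; it also identifies the target equilibrium as the unique element of the kernel of the above Dirichlet form (characterised by $h(x)+h(\xa)=0$ on $b\otimes\mu\otimes\mu$-a.e.\ pair combined with $h^\downarrow=-h$) compatible with the conserved $\eta_i$'s.

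The main obstacle is promoting this qualitative triviality of the kernel modulo $\eta_i$ into quantitative exponential decay. Here I would work component by component: on each compact $\mt_i\cup\mt_i^\downarrow$ I would use lower semicontinuity of $b$ together with compactness to build a finite cover by open patches on which $b$ is uniformly bounded below along admissible pairs, and then chain a Poincar\'e-type inequality on this finite macroscopic graph to produce $\lambda_i>0$ with
\[\iint b\bigl(h(x)+h(\xa)\bigr)^2\md\mu\,\md\mu_*\geqslant\lambda_i\int_{\mt_i\cup\mt_i^\downarrow}(h-h_{\infty})^2\md\mu,\]
where $h_\infty$ denotes the kernel projection. Gronwall then yields exponential decay of $\|h-h_\infty\|_{L^2(\mu)}$ with rate $2\min_i\lambda_i$, a constant depending only on $b$ and $\mu$; the Cauchy--Schwarz bound $\|\cdot\|_{TV}\leqslant\mu(\ms)^{1/2}\|\cdot\|_{L^2(\mu)}$ upgrades this to the claimed total variation estimate on each $\mt_i$. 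Global existence and uniqueness of the solution are taken from the earlier well-posedness results recalled in the paper.
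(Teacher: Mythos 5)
Your proposal is correct and follows essentially the same route as the paper: conservation of the symmetric part $\mu$, reduction to the linear equation for $h=\md(f-\mu)/\md\mu$, the $H$-theorem with dissipation $\iint b(h+h_*)^2$, a clopen (hence finite, compact) decomposition of $\supp\mu$ obtained from lower semicontinuity of $b$ together with $b(x,x^\downarrow)>0$, a finite covering by patches on which $b$ is uniformly bounded below combined with a chaining Poincar\'e inequality, and finally Gr\"onwall plus Cauchy--Schwarz for the total variation bound. The only (cosmetic) difference is that you organise the decomposition via the bipartition colour classes of the graph with edges $\{b>0\}$, whereas the paper uses connected components of the ``common collision partner'' graph $\Gamma$ and the quadruple $\mt,\mt_*,\mt^\downarrow,\mt_*^\downarrow$; under the hypothesis $b(x,x^\downarrow)>0$ these coincide.
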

	We do not enter in details here in the construction of the sets~$\mt_i$, but they will be given as connected components of a graph of interaction which can be easily determined from~$\mu$ and~$b$.

	The motivating kinetic equation studied in~\cite{HKMS} also includes alignment, which consists in jumps in the angle variable towards the average direction of a pair of interacting particles (in the same fashion as in the so-called BDG alignment model~\cite{BDG,CDW}, but without directional noise). For special initial configurations, it is shown that the distribution of angles concentrates on antipodal Dirac masses. In future work, we may hope to combine the results of the present paper with the ideas from~\cite{DFR} in which, for alignment only, the Dirac masses are shown to be locally asymptotically stable. 
	
	The model \eqref{c4:REVT} we study in this article can be related to other many-particle models at the mesoscopic scale from mainly biological context, where reversal interactions of many particle systems are considered as well. After this kind of local-reversal operator was introduced in~\cite{HKMS}, its effect in combination with \emph{alignment} of individuals as well as directional diffusion was further studied in \cite{KS}. 
	Moreover, with biological motivation again coming from the \emph{rippling phenomenon} within colonies of myxobacteria \cite{IO}, in \cite{DMY1} the authors introduced and investigated a spatially heterogeneous model of mean-field type, i.e. individual's interactions are modelled as a non-local process. There,
	reversals of cells either depend on the density of the agents moving in reverse direction or occur spontaneous. It turned out to be crucial to include a waiting time between reversals of individuals to be able to see the rippling-wave patterns \cite{DMY2, IO, IMWKO}. A different context of the importance of reversals of cells is given in \cite{EP}, where protrusions and retractions in the movement of polarized cells are studied. The models are based on individual-cell dynamics, where the switching of direction of an agent is modelled by a probability, depending on the microscopic “steps” the agent did in that direction. From there, the authors derive a kinetic-renewal system and further study the relevant macroscopic limits in various scenarios of different complexity.  
	
	The structure of the paper is the following: in Section~\ref{propertiesQRev}, we prove existence and uniqueness of measure solutions to the reversal collision dynamics~\eqref{eq-rev-coll-measure}. In Section~\ref{c4:asy}, we provide the proof of our main result regarding the characterization of the asymptotic steady-state and the exponential rate of convergence. In Section~\ref{section-torus} we apply this result to detail the case of the one-dimensional torus and finally in Section~\ref{c4:numerics} we provide numerical simulations of this specific example.
	
	\section{Properties of solutions, existence and uniqueness}
	\label{propertiesQRev}
	
	In this section, we describe properties of solutions to the reversal collision dynamics operator, and further use them to prove existence and uniqueness in case of a bounded kernel~$b$.
	
	When~$h$ is a function on~$S$, we denote~$h^\downarrow$ the function given by~$h^\downarrow(x)=h(x^\downarrow)$. Similarly, when~$f$ is a measure on~$\ms$, we denote~$f^\downarrow$ the measure such that for any Borel set~$A$ of~$\ms$, we have
	\[\int_A\md f^\downarrow = \int_{A^\downarrow} \md f.\]
	We start by observing several invariance properties of the reversal collision dynamics. 
	\begin{proposition}Let $f_I$ be a measure on~$\ms$ with total mass~$\rho$ and~$\mu:=\frac12(f_I+f_I^\downarrow)$ its symmetric part. Then for any measure solution~$f(t)$, defined as in Def. \ref{def-rev-coll-measure}, to the reversal collision dynamics~\eqref{eq-rev-coll-measure}, 
		\begin{itemize}
			\item[(i)] the measure~$\frac12(f(t)+f(t)^\downarrow)$ is constant in time (and therefore equal to~$\mu$).
			\item[(ii)] for any symmetric Borel set~$A$ (i.e. $A^\downarrow=A$), the quantity~$\int_A\md f(t)$ is constant in time (and therefore equal to~$\int_A\md \mu$).
			\item[(iii)] the total mass is conserved:
			\begin{equation*}\int_{\ms} \md f(t)=\int_{\ms} \md f_I=\rho.
			\end{equation*}
			\item[(iv)] $t \mapsto \frac{1}{\rho}f(\frac{t}{\rho},\cdot)$ is a solution to the reversal collision dynamics and ~or any time $t$ it is a probability measure on $\ms$ (with initial condition $\frac1{\rho}f_I$).
		\end{itemize}
	\end{proposition}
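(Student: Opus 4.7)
The plan is to derive (i) directly from the evolution equation by exploiting the symmetry $b(x,\xa)=b(\mx,\mxa)$, and then deduce (ii), (iii), (iv) as corollaries of (i) together with elementary manipulations.

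For (i), I would fix an arbitrary Borel set $A$ and compute $\frac{\md}{\md t}\int_A \md(f+f^\downarrow)$. By the definition of $f^\downarrow$, we have $\int_A \md f^\downarrow = \int_{A^\downarrow}\md f$, so this equals
\[\frac{\md}{\md t}\int_A \md f + \frac{\md}{\md t}\int_{A^\downarrow}\md f.\]
Applying \eqref{eq-rev-coll-measure} to both $A$ and $A^\downarrow$ (noting $(A^\downarrow)^\downarrow=A$), the gain term of one equation is precisely the loss term of the other, so the sum vanishes. Hence $\frac12(f(t)+f(t)^\downarrow)$ is constant, and evaluating at $t=0$ identifies it with $\mu$. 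The only subtlety is justifying the differentiation under the integral and using the continuity of $t\mapsto \int_A \md f(t)$ to recover the pointwise equality from the derivative identity, which is handled by the assumption that $\int_A \md f(t)$ is continuous with prescribed derivative on $(0,T)$.

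For (ii), when $A$ is symmetric the integrals $\int_A \md f$ and $\int_A \md f^\downarrow=\int_{A^\downarrow}\md f=\int_A \md f$ coincide, so (i) yields $\int_A\md f(t) = \int_A\md\mu$. Part (iii) is the special case $A=\ms$, which is trivially symmetric. For (iv), I would set $g(t,\cdot):=\tfrac1\rho f(t/\rho,\cdot)$ and check directly that the product measure $\md g\,\md g_*$ scales as $\rho^{-2}\md f\,\md f_*$, while the chain rule contributes an additional factor of $\rho^{-1}$ on the time derivative; together with the factor $\rho^{-1}$ already present in $\md g$ on the left-hand side, the equation \eqref{eq-rev-coll-measure} is preserved, and the normalization of the initial datum follows from (iii).

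I do not anticipate a real obstacle here: the proposition is essentially a structural statement that the reversal collision dynamics conserves the symmetric part of the measure, and all four items reduce to symmetry of $b$ and the involutive property $(\mx)^\downarrow=x$. The most delicate point, if any, is being careful with the passage from the differential identity on $(0,T)$ to the integral identity $\int_A\md f(t) = \int_A\md \mu$, which relies on the continuity at $t=0$ built into Definition~\ref{def-rev-coll-measure}.
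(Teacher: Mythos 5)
Your proof is correct and follows essentially the same route as the paper's (very terse) argument: (i) from cancellation of gain and loss terms after applying the evolution equation to $A$ and $A^\downarrow$ and using $(A^\downarrow)^\downarrow=A$, then (ii) and (iii) as direct consequences, and (iv) from the quadratic scaling of the collision operator. You simply spell out the details that the paper leaves implicit, and all the steps check out.
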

	
	\begin{proof}(i) This is an obvious consequence of $(A^\downarrow)^\downarrow = A$, which can be seen from formulation \eqref{eq-rev-coll-measure} using the symmetry of the collision kernel \eqref{symmetry-b}.\\
	(ii) This follows immediately from formulation \eqref{eq-rev-coll-measure}, since on a symmetric set $f$ is equal to its symmetric part.\\
	(iii) Application of (ii) with $A=\ms$.\\
	(iv) This is a straightforward consequence of the fact that the collision operator is quadratic.
	\end{proof}
	
	By the symmetry~\eqref{symmetry-b} the conserved even part~$\mu$ of $f$ is also a stationary solution to the reversal collision dynamics~\eqref{eq-rev-coll-measure}. We shall therefore restrict our attention to the odd part $f-\mu$, satisfying a linear problem, and also to choosing probability measures as initial data (as a consequence of (iv)).
	
	\begin{theorem} \label{thm-existence-uniqueness} Let~$f_I$ be a probability measure on~$\ms$, let $\mu=\frac12(f_I+f_I^\downarrow)$, and let the collision kernel~$b\geqslant 0$ be measurable and bounded on $\ms\times\ms$.
		
		Then there exists a unique solution~$f$ to the reversal collision dynamics~\eqref{eq-rev-coll-measure} on~$[0,\infty)$ with initial condition~$f_I$, which can be written as~$f=(1+h)\mu$, where~$h\in C([0,\infty),L^\infty( \mu))$ solves
		\begin{equation}
			\partial_t h = -2 \int_{\ms}b(x,x_*)(h+h_*)\md \mu_* \,.\label{lin-eq-h}
		\end{equation}
		Furthermore we have~$-1\leqslant h \leqslant1$ in $\ms\times [0,\infty)$.
	\end{theorem}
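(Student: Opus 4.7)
The plan is to use the ansatz $f=(1+h)\mu$ to linearize the dynamics. Item~(i) of the previous proposition yields $f+f^\downarrow=2\mu$ for any solution, and combined with $f^\downarrow\geqslant 0$ this forces $f\leqslant 2\mu$ as measures; hence $f\ll\mu$ with density $u:=\md f/\md\mu\in[0,2]$ $\mu$-a.e., so $h:=u-1\in L^\infty(\mu)$ automatically satisfies $-1\leqslant h\leqslant 1$ and $h^\downarrow=-h$ $\mu$-a.e. Substituting $f=(1+h)\mu$ into \eqref{eq-rev-coll-measure} and using the three symmetries $b(x^\downarrow,\xa^\downarrow)=b(x,\xa)$, $\mu^\downarrow=\mu$ and $h^\downarrow=-h$, the $h\,h_*$ cross-terms cancel between gain and loss and one recovers \eqref{lin-eq-h} tested against every Borel set. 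This reduces the theorem to well-posedness of \eqref{lin-eq-h} in $L^\infty(\mu)$ together with preservation of $-1\leqslant h\leqslant 1$ and of the antisymmetry.

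The right-hand side of \eqref{lin-eq-h} defines a bounded linear operator $L$ on the Banach space $L^\infty(\mu)$, with $\|L\|\leqslant 4\|b\|_\infty$ since $b$ is bounded and $\mu$ a probability measure. Standard Banach-valued ODE theory then produces, for every $h_I\in L^\infty(\mu)$, a unique solution $h=e^{tL}h_I\in C^1([0,\infty),L^\infty(\mu))$, uniqueness in that class also following from a direct Grönwall estimate on the difference of two solutions. The antisymmetry $h^\downarrow(t)=-h(t)$ is preserved in time because both $-h$ and $h^\downarrow$ solve \eqref{lin-eq-h}---the first by linearity and the odd symmetry $L(-h)=-Lh$, the second by invariance of $L$ under the pushforward by the involution (using $\mu^\downarrow=\mu$ and \eqref{symmetry-b})---and share the common initial datum $-h_I=h_I^\downarrow$.

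The main obstacle is the preservation of the pointwise bound $-1\leqslant h\leqslant 1$, equivalently $0\leqslant u\leqslant 2$ for $u=1+h$, since the integral term in \eqref{lin-eq-h} changes sign and no direct maximum principle is available. Writing $\tilde b(x):=\int_\ms b(x,\xa)\md\mu_*$, equation \eqref{lin-eq-h} for $u$ reads $\partial_t u+2\tilde b(x)u=G[u](x)$ with source $G[u]:=4\tilde b-2\int_\ms b(x,\xa)u(\xa)\md\mu_*$, whose Duhamel representation is
\[
u(t,x)=e^{-2\tilde b(x)t}u_I(x)+\int_0^t e^{-2\tilde b(x)(t-s)}G[u](s,x)\,\md s.
\]
The decisive observation is that whenever $u$ takes values in $[0,2]$ one has $G[u]\in[0,4\tilde b(x)]$ pointwise, and the explicit identity $\int_0^t 2\tilde b(x)e^{-2\tilde b(x)(t-s)}\md s=1-e^{-2\tilde b(x)t}$ makes the right-hand side of the Duhamel formula return to $[0,2]$. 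Hence the closed set $K_T:=\{u\in C([0,T],L^\infty(\mu)):0\leqslant u\leqslant 2\}$ is invariant under the Duhamel map, which is moreover a contraction on $K_T$ with factor $1-e^{-2\|b\|_\infty T}<1$ (the same computation applied to differences). Banach's fixed-point theorem therefore yields for every $T>0$ a unique element of $K_T$ solving \eqref{lin-eq-h}, which by uniqueness in $L^\infty(\mu)$ coincides with $1+e^{tL}h_I$; the bounds are thus preserved globally. Finally, $f(t):=(1+h(t))\mu$ is a nonnegative measure satisfying $f+f^\downarrow=2\mu$, and reversing the substitution of the first paragraph shows that $f$ solves \eqref{eq-rev-coll-measure} in the sense of Def.~\ref{def-rev-coll-measure}, completing existence and uniqueness at once.
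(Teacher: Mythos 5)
Your proof is correct and follows essentially the same route as the paper: reduce to the linear equation \eqref{lin-eq-h} via the Radon--Nikodym derivative of $f-\mu$ with respect to $\mu$ (using nonnegativity and conservation of the symmetric part to get $f\leqslant 2\mu$), solve it as a bounded linear ODE in $L^\infty(\mu)$, and propagate the bounds $-1\leqslant h\leqslant 1$ through the mild (Duhamel) formulation. Your explicit invariant-set and contraction argument for the bound preservation simply spells out what the paper declares ``easily seen'' from its fixed-point formulation \eqref{eq-fixed-point}.
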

	
	\begin{proof} By the nonnegativity of $f_I$ and therefore also of $f_I^\downarrow$, we have $f_I\leqslant2\mu$, implying that the odd part $f_I-\mu$ of $f_I$ is absolutely continuous with respect to $\mu$. We denote its Radon-Nikodym derivative by $h_I$. By the evenness of $\mu$ and the oddness of $f-\mu$, it is an odd function, i.e.~$h_I^\downarrow = -h_I$, and it satisfies $|h_I|\leqslant1$ since, by $\md f_I = (1+h_I)\md\mu$ and by the nonnegativity of~$f_I$ and~$\mu$ it satisfies $h_I\geqslant -1$ (and therefore, by oddness, $h_I\leqslant1$). 
	
	Since the right hand side of \eqref{lin-eq-h} is a bounded linear operator on $L^\infty(\mu)$, the initial value problem for \eqref{lin-eq-h} with the initial condition $h(t=0) = h_I$ has a unique solution in $C([0,\infty),L^\infty(\mu))$. The solution could be constructed by Picard iteration on the mild formulation
	\begin{equation}\label{eq-fixed-point}
	   h = e^{-\gamma t}h_I - 2\int_0^t e^{\gamma(s-t)} 
	   \int_{\ms} b(x,x_*)h_*(s) d\mu_* ds \,,\qquad 
	   \gamma(x) = 2\int_{\ms} b(x,x_*)d\mu(x_*) \,,
	\end{equation}
	which is easily seen to propagate the bounds $-1\leqslant h\leqslant1$. It is easily computed that~$f=(1+h)\mu$ fulfils equation \eqref{eq-rev-coll-measure}. Moreover, due to the continuity of $h$ with respect to time this also holds for $f$, from which one can conclude that all properties of Def. \ref{def-rev-coll-measure} are satisfied. Hence,~$f=(1+h)\mu$ is a solution to the reversal collision dynamics.
	
	Concerning uniqueness: For any measure solution $f$ of \eqref{eq-rev-coll-measure} we can use the argument used for $f_I$
	above to assert the existence of $h$ with the desired properties,
	such that $f=(1+h)\mu$. Substitution in \eqref{eq-rev-coll-measure}
	gives \eqref{lin-eq-h}, such that $h$ has to be the unique solution of \eqref{lin-eq-h} derived above.
\end{proof}
	
	\begin{rem} In Theorem~\ref{thm-existence-uniqueness}, the topological nature of~$\ms$ does not play any role. The result is therefore valid for any measurable space~$\ms$. However, once we have a compact metric space~$\ms$, the space of probability measures has a natural topology of weak convergence and it may be of interest to know if the solution of the reversal collision dynamics is continuous with respect to the initial condition. Under the hypothesis of Theorem~\ref{thm-existence-uniqueness}, and with no more restriction on the collision kernel~$b$, this is not the case. Indeed, in our motivating example, taking~$\ms=\T$ and~$b(\varphi,\varphi^\downarrow)=1$ if~$d(\varphi,\varphi^\downarrow)>\frac{\pi}2$
		and~$b(\varphi,\varphi^\downarrow)=0$ otherwise, we look at the family of initial conditions~$f^\varepsilon_{I}=\frac12(\delta_0+\delta_{\frac{\pi}2+\varepsilon})$. The solution is given, for~$\varepsilon>0$ by
		\[f^\varepsilon=\frac14\big((1+e^{-t})(\delta_0+\delta_{\frac{\pi}2+\varepsilon})+(1-e^{-t})(\delta_\pi+\delta_{-\frac{\pi}2+\varepsilon})\big).\]
		However, the solution for the case~$\varepsilon=0$ is the constant measure $f^0=f_{I}^0$ for all time. The family of initial conditions~$f_{I}^\varepsilon$ converges weakly to $f_{I}^0$ when~$\varepsilon\to0$, but at any given fixed time~$t>0$, the solution~$f^\varepsilon$ does not converge weakly to~$f^0$ when~$\varepsilon\to0$. 
		
		If we suppose now that the collision kernel~$b$ is Lipschitz continuous, we prove in Appendix~\ref{appendix-wasserstein} that we recover the well-posedness in the sense of Hadamard for the topology of weak convergence.
	\end{rem}
	
	\section{Asymptotic behavior}\label{c4:asy}
	In this section, we fix a probability measure~$f_I$ on~$\ms$ and denote by~$f$ the solution to the reversal collision dynamics, given by Theorem~\ref{thm-existence-uniqueness}. We still denote by~$\mu$ the symmetric part of~$f_I$ and by~$h\in C([0,+\infty),L^\infty(\mu))$ the function such that~$f=(1+h)\mu$.
	
	The variance of the anti-symmetric part of $f$ in the probability space determined by $\mu$ is then given by
	\begin{equation}\label{c4:H}
		\h[f]= \frac{1}{2}\int_{\ms} h^2 \, \md \mu = \frac{1}{4}\iint_{\ms\times\ms}  (h-h_*)^2 \, \md \mu \,\md \mu_* \,. 
	\end{equation}
		Since~$h$ and $\partial_th$ are uniformly bounded ($\mu$-a.e.), we can interchange integration and derivation in time, and we obtain using \eqref{lin-eq-h}
	\begin{align*}
		\frac{\md}{\md t} \h[f] &= \int_{\ms}h \,\partial_th \md \mu=-2 \iint_{\ms\times\ms} b(x,x_*) h(h+\ha) \, \md \mu\, \md \mu_*  \\
		&=- \iint_{\ms\times\ms} b(x,x_*) (h+\ha)^2 \, \md \mu \,\md \mu_* =: - \D[f] , 
	\end{align*}
	where the last equality is due to symmetry. Therefore the quantity~$\mathcal{H}[f]$ is nonincreasing in time and bounded below and, thus, convergent as~$t\to+\infty$. By differentiating in time once more, using the fact that~$b$ is bounded and~$h$ is uniformly bounded in time, we obtain that the second derivative of~$\mathcal{H}[f]$ is uniformly bounded in time, and this classically ensures that the derivative of~$\mathcal{H}[f]$  converges to~$0$ as~$t\to+\infty$. This is an indication that the solution may converge to a state such that~$\D[f]=0$ (and we will prove that this is indeed equivalent to be a steady-state).
	
	From the expression for~$\D[f]$ we expect that in equilibrium collision partners carry opposite values of $h$, whereas two elements having a common collision partner must have the same value of~$h$. This motivates the definitions below. 
	We first denote by $\mk$ the support of $\mu$ defined in the sense of measures:
	\begin{equation*}
		\mk:=\supp{(\mu)}:= \big\{x \in \ms: \, \forall \ve >0, \int_{B_{\ve}(x)}\md \mu > 0, \,\, \big\},
	\end{equation*}
	which is compact, since it is a closed subspace of the compact space $\ms$.
	
	\begin{Def} \label{defGraphGamma}
		For~$x,x_*\in\mk$, we say that $x$ and~$x_*$ are \underline{collision partners} whenever~$b(x,x_*)>0$. 
		\begin{itemize}
			\item[(i)] For a set~$\mt\subset\mk$, we denote by~$\mt_*$ the \underline{set of collision partners} of elements of~$\mt$: 
			\begin{equation*} 
				\mt_*:=\{x_*\in\mk, \exists x\in\mt, b(x,x_*)>0\}.
			\end{equation*}
			\item[(ii)]
			 We say $x\in\mk$ and $y\in\mk$ are \underline{adjacent} and write~$x \longleftrightarrow y$ when they have a common collision partner, i.e. there exists~$x_* \in \mk$ such that~$b(x,x_*)>0$ and~$b(y,x_*)>0$.
			This relation defines the graph $\Gamma = (\mk,\mathcal{E})$, where the vertices are the elements of $\mk$ and the edges are given by $\mathcal{E}:=\{(x,y) \in \mk \times \mk: \,\, x\longleftrightarrow y\}$.
			\item[(iii)] The \underline{connected component} of~$x\in\mk$ is the set of all~$y\in\mk$ such there exists a path $(x=x_0,x_1,\dots,x_n=y)$ of adjacent nodes with~$x_{i}\longleftrightarrow x_{i+1}$ for $0\leqslant i<n-1$.
		\end{itemize}
	\end{Def}
	The following result provides a decomposition of the reversal collision dynamics.
	
	\begin{proposition}\label{prop-decomposition}
	Let the assumptions of Theorem \ref{thm-existence-uniqueness} hold and let~$b$ be lower semi-continuous. Let $f(t)$, $t\geqslant 0$, be the solution of \eqref{eq-rev-coll-measure} and let~$\mt$ be a connected component of the graph~$\Gamma$.
	\begin{itemize}
	    \item[(i)] If $\mt_*$ is empty, $\mt$ consists of an isolated node $x$ of $\Gamma$, $\mt=\{x\}$, and 
	    \[\int_{\{x\}} \md f(t) = \int_{\{x\}} \md f_I \,.\]
	    \item[(ii)] If $\mt_*$ is nonempty, then $\mt$, $\mt^\downarrow$, $\mt_*$, and $(\mt_*)^\downarrow$ are connected components of~$\Gamma$ and open sets in~$\mk$, satisfying $(\mt_*)_*=\mt$ and $\mt_*^{\downarrow}:= (\mt_*)^\downarrow = (\mt^\downarrow)_*$.\\
	    Let $\mathcal{A} :=\mt\cup\mt^\downarrow\cup\mt_*\cup\mt^{*\downarrow}$. Then $f|_{\mathcal{A}}$ is a solution to the reversal collision dynamics on~$\mathcal{A}$.
	\end{itemize}
	\end{proposition}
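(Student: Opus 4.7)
The plan is to handle parts (i) and (ii) separately, relying throughout on the reversal symmetry \eqref{symmetry-b}, on the inclusion $\supp f(t) \subseteq \mk$ at every time (which follows from $f = (1+h)\mu$ with $|h| \leqslant 1$ in Theorem~\ref{thm-existence-uniqueness}), and on $\mk^\downarrow = \mk$ because $\mu$ is reversal-invariant. For (i), if $\mt_*$ is empty then no element of $\mt$ has a collision partner in $\mk$, and since adjacency requires a shared collision partner, $\mt$ consists of a single isolated vertex $\{x\}$. The symmetry \eqref{symmetry-b} together with $\mk^\downarrow = \mk$ gives $b(x^\downarrow, x_*) = 0$ for all $x_* \in \mk$ as well, so both collision integrals in \eqref{eq-rev-coll-measure} with $A = \{x\}$ vanish and $\int_{\{x\}} \md f(t)$ is conserved.

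For (ii), the first step is to identify the graph structure of $\mathcal{A}$. The reversal symmetry gives $x\longleftrightarrow y$ iff $x^\downarrow\longleftrightarrow y^\downarrow$, so $\mt^\downarrow$ is a connected component and $\mt_*^\downarrow = (\mt^\downarrow)_*$. To see that $\mt_*$ is a connected component, I take $x_*, y_* \in \mt_*$ with parents $x, y \in \mt$, and lift a path $x = x_0 \longleftrightarrow \dots \longleftrightarrow x_n = y$ inside $\mt$, with intermediate common collision partners $z_0, \dots, z_{n-1} \in \mt_*$, to a path $x_* \longleftrightarrow z_0 \longleftrightarrow \dots \longleftrightarrow z_{n-1} \longleftrightarrow y_*$ in $\mt_*$ using each $x_i \in \mt$ as a common partner at the corresponding step. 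For maximality: if $x_* \in \mt_*$ is adjacent to $w \in \mk$ through a common partner $u \in \mk$, then $x_*$ itself serves as a common partner of $u$ and of any parent $x \in \mt$ of $x_*$, so $u \longleftrightarrow x$ in $\Gamma$, forcing $u \in \mt$ and hence $w \in \mt_*$. The same kind of observation yields $(\mt_*)_* = \mt$.

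The second step is openness, where lower semi-continuity of $b$ enters. For $x \in \mt$, pick $y \in \mt_*$ (nonempty by assumption) with $b(x, y) > 0$; lower semi-continuity produces an open neighborhood $U$ of $x$ on which $b(\cdot, y) > 0$, and then $U \cap \mk \subseteq (\mt_*)_* = \mt$. The analogous argument applied to the other three sets, using the identities established in step one, yields openness in $\mk$ of each of $\mt$, $\mt^\downarrow$, $\mt_*$, $\mt_*^\downarrow$. For the third step, I would take a Borel set $B \subseteq \mathcal{A}$ and use the symmetry $\mathcal{A}^\downarrow = \mathcal{A}$ so that $B^\downarrow \subseteq \mathcal{A}$; starting from \eqref{eq-rev-coll-measure} for $f$, it then suffices to show that for $x$ in any of the four pieces of $\mathcal{A}$, any $x_* \in \mk$ with $b(x,x_*)>0$ automatically lies in $\mathcal{A}$. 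A short case analysis, combining the identities $(\mt_*)_* = \mt$ and $\mt_*^\downarrow = (\mt^\downarrow)_*$ with the reversal symmetry, gives exactly this, so $f|_{\mathcal{A}}$ satisfies \eqref{eq-rev-coll-measure} with $\ms$ replaced by $\mathcal{A}$.

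The hard part will be the graph-theoretic bookkeeping in the first step, and in particular the maximality of $\mt_*$: because adjacency is defined through an auxiliary common partner rather than through the direct condition $b(x,y)>0$, one must use $x_*$ itself as the bridge to push the auxiliary vertex $u$ into $\mt$. Once this identification is in place, openness and the reduction of the dynamics follow routinely from lower semi-continuity and the reversal symmetry of $b$.
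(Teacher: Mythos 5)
Your proposal is correct and follows essentially the same route as the paper: lifting paths through common collision partners to obtain connectedness of $\mt_*$, reversal symmetry of $b$ for $\mt^\downarrow$ and the identity $(\mt_*)^\downarrow=(\mt^\downarrow)_*$, lower semicontinuity for openness, and the invariance $\mathcal{A}_*=\mathcal{A}^\downarrow=\mathcal{A}$ for the reduction of the dynamics. The only substantive difference is that you explicitly verify maximality of $\mt_*$ as a connected component (using $x_*$ itself as the bridging partner to force the auxiliary vertex into $\mt$), a point the paper's proof leaves implicit.
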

	
	\begin{proof}
	(i) is obvious.\\
	(ii) Let $\mt$ be such that $\mt_*$ is nonempty, and let $x_*,y_*\in\mt_*$. Then there exist $x,y\in\mt$ such that~$b(x,x_*), b(y,y_*)>0$. By connectedness of $\mt$ there exists a path 
	\[
	   x=x_0 \leftrightarrow x_1 \leftrightarrow \cdots \leftrightarrow x_N = y \,,
	\]
	and, consequently, there exists $x_{*,j}\in \mt_*$ such that $b(x_{j-1},x_{*,j}), b(x_j,x_{*,j})>0$, $j=1,\ldots,N$. This implies
	\[
	  x_* \leftrightarrow x_{*,1} \leftrightarrow \cdots \leftrightarrow x_{*,N} \leftrightarrow y_* \,,
	\]
	and therefore \underline{connectedness of $\mt_*$}. The reflection invariance of $b$ implies \underline{connectedness of $\mt^\downarrow$} \underline{and of $(\mt_*)^\downarrow$}.\\
	By the argument above, connectedness of $\mt_*$ implies connectedness of $(\mt_*)_*$. Non-emptiness of~$\mt_*$ implies the existence of $x\in\mt$ with a collision partner in $\mt_*$, and therefore $x\in\mt\cap(\mt_*)_*$, implying \underline{$(\mt_*)_* = \mt$}. \\
	Let $x\in(\mt_*)^\downarrow$. This is equivalent to $x^\downarrow\in\mt_*$, which is again equivalent to the existence of a collision partner $y\in\mt$ of $x^\downarrow$. By the reflection invariance of $b$ this is equivalent to the existence of a collision partner $y^\downarrow\in\mt^\downarrow$ of $x$. Finally this is equivalent to $x\in(\mt^\downarrow)_*$, proving~\underline{$(\mt_*)^\downarrow = (\mt^\downarrow)_* =: \mt_*^\downarrow$}. \\
	The results so far imply that the roles of the four sets $\mt,\mt_*, \mt^\downarrow, \mt_*^\downarrow$ can be interchanged. For example,
	$\hat\mt := \mt_*^\downarrow$ implies $\hat\mt_* = \mt^\downarrow$, $\hat\mt^\downarrow = \mt_*$, and $\hat\mt_*^\downarrow = \mt$. Therefore it is sufficient to prove that one of the for sets is open in $\mk$.\\
	For every $x_*\in\mt_*$ there exists $x\in\mt$ such that $b(x,x_*)>0$. By lower semicontinuity of~$b$ this implies $b(x,y_*)>0$ for all $y_*$ in a neighborhood of $x_*$, implying $y_*\in\mt_*$ and therefore \underline{openness of $\mt_*$}. \underline{Openness of $\mt,\mt^\downarrow$, and~$\mt_*^\downarrow$} follows from the remark above, which also implies~$\mathcal{A}=\mathcal{A}_*=\mathcal{A}^\downarrow=\mathcal{A}_*^\downarrow$. Thus, for $A\subset\mathcal{A}$, the right hand side of \eqref{eq-rev-coll-measure} depends only on $f|_{\mathcal{A}}$, completing the proof.
	\end{proof}
	
	The next step is the identification of equilibria of the dynamics on sets of the form of $\mathcal{A}$. Therefore, for every set $\mt$ we define its mass and its average by
	\begin{align}\label{def:massT}
		\rho:=\int_{\mt}\md \mu\,,\quad \langle h\rangle_{\mt}=\frac1{\rho}\int_{\mt}h\,\md \mu,
	\end{align}
	and similarly for $\mt_*$, as well as the quantity
	\begin{align}\label{def:constT}
		\eta_{\mt}:=\frac{\rho\langle h\rangle_{\mt}-\rho_*\langle h\rangle_{\mt_*}}{\rho+\rho_*}.
	\end{align} 
	
	\begin{proposition}\label{prop-steadystates} Let the assumptions of Proposition \ref{prop-decomposition} hold and let~$\mathcal{A} :=\mt\cup\mt^\downarrow\cup\mt_*\cup\mt_*^{\downarrow}$ with~$\mt_*$ nonempty.
		\begin{itemize}
			\item[(i)] The measure $\mu$ assigns positive mass to all four parts of $\mathcal{A}$, in particular $\rho>0$ and $\rho_*>0$. The average $\langle h\rangle_{\mt}$ satisfies that
			\begin{equation}
				\eta_{\mt}=\frac{\rho\langle h\rangle_{\mt}-\rho_*\langle h\rangle_{\mt_*}}{\rho+\rho_*}=\frac{\int_{\mt\cup\mt_*^{\downarrow}}h\,\md \mu}{\int_{\mt\cup\mt_*^{\downarrow}}\md \mu}=\frac{\int_{\mt\cup\mt_*^{\downarrow}}\md f_I}{\int_{\mt\cup\mt_*^{\downarrow}}\md \mu}-1 \in [-1,1] \label{eq-eta}
			\end{equation}
			is independent of $t$.
			\item[(ii)] The following conditions are equivalent
			\begin{itemize}
				\item $f|_\mathcal{A}$ is a steady state of the reversal collision dynamics on~$\mathcal{A}$,
				\item $\mathcal{D}_{\mt}[f]:=\iint_{\mt\times\mt_*}   b(x,\xa) (h+\ha)^2 \, \md \mu \,\md \mu_*=0$,
				\item $h=\eta_{\mt}$ on~$\mt\cup\mt_*^\downarrow$ and~$h=-\eta_{\mt}$ on~$\mt_*\cup\mt^\downarrow$,  $\mu$-almost everywhere.
			\end{itemize}
		\end{itemize}
	\end{proposition}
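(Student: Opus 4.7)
For part (i), openness of the four pieces of $\mathcal{A}$ in $\mk=\supp\mu$ (Proposition~\ref{prop-decomposition}) immediately yields positive $\mu$-mass, hence $\rho,\rho_*>0$. The chain of identities \eqref{eq-eta} is purely algebraic: oddness of $h$ and evenness of $\mu$ give $\int_{\mt_*^\downarrow}h\,\md\mu=-\int_{\mt_*}h\,\md\mu$ and $\mu(\mt_*^\downarrow)=\rho_*$, while the last equality uses $\md f_I=(1+h_I)\md\mu$; the bound $|\eta_\mt|\leqslant 1$ follows from $|h|\leqslant 1$ in Theorem~\ref{thm-existence-uniqueness}. For time-independence, I plan to compute $\frac{\md}{\md t}\int_\mt\md f$ and $\frac{\md}{\md t}\int_{\mt_*^\downarrow}\md f$ directly from \eqref{eq-rev-coll-measure}, noting that collision partners of $\mt$ lie in $\mt_*$ and those of $\mt^\downarrow$ in $\mt_*^\downarrow$. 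After substituting $f=(1+h)\mu$ and using the reflection invariance of $b$ and $\mu$ together with oddness of $h$, both quantities reduce to $\pm2\iint_{\mt\times\mt_*}b(h+h_*)\,\md\mu\,\md\mu_*$ with opposite signs, so the sum $\int_{\mt\cup\mt_*^\downarrow}\md f$ is conserved, giving time-independence of~$\eta_\mt$.

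For part (ii), I close the loop (third)~$\Rightarrow$~($\mathcal{D}_\mt[f]=0$)~$\Rightarrow$~(steady state)~$\Rightarrow$~(third). The first implication is immediate since $h+h_*=\eta_\mt-\eta_\mt=0$ on $\mt\times\mt_*$. For the second, $\mathcal{D}_\mt[f]=0$ forces $h+h_*=0$ for $b\,\md\mu\,\md\mu_*$-a.e.\ $(x,x_*)\in\mt\times\mt_*$, hence $\partial_t h=-2\int b(x,x_*)(h+h_*)\md\mu_*=0$ $\mu$-a.e.\ on $\mt$, and symmetric arguments on the three remaining pieces yield a steady state on $\mathcal{A}$. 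The reverse implication is the Lyapunov computation from the beginning of Section~\ref{c4:asy} applied to the sub-system $f|_\mathcal{A}$ (valid by Proposition~\ref{prop-decomposition}): a steady state satisfies $0=\int_\mathcal{A} h\,\partial_t h\,\md\mu=-\iint_{\mathcal{A}\times\mathcal{A}}b(h+h_*)^2\md\mu\md\mu_*$, and by the reflection symmetries the nonvanishing blocks of this integral are all equal (up to a positive factor) to $\mathcal{D}_\mt[f]$, forcing $\mathcal{D}_\mt[f]=0$.

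The substantive step is (steady state)~$\Rightarrow$~(third condition), which reduces (via $h+h_*=0$ and the oddness/evenness symmetries) to showing that $h$ is $\mu$-a.e.\ constant on $\mt$. The key observation, writing $V(x):=\{x_*:b(x,x_*)>0\}$, is that lower semicontinuity of $b$ combined with $\mk=\supp\mu$ upgrades $\Gamma$-adjacency to the equivalence $x\longleftrightarrow y\Longleftrightarrow\mu(V(x)\cap V(y))>0$: any common collision partner $z\in\mk$ sits in the open set $V(x)\cap V(y)$, producing a neighborhood of positive $\mu$-measure. Fubini applied to $\mathcal{D}_\mt[f]=0$ yields a full-$\mu$-measure set $G\subset\mt$ on which $h(x_*)=-h(x)$ for $\mu$-a.e.\ $x_*\in V(x)$, so adjacent $x,y\in G$ satisfy $h(x)=h(y)$ by comparing the two relations on $V(x)\cap V(y)$. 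The main obstacle is extending this along $\Gamma$-paths whose intermediate nodes may lie in the $\mu$-null set $\mt\setminus G$; I overcome it by a perturbation argument: if $z$ bridges adjacent $x,y$ in a path, lower semicontinuity of $b$ shows that $\{z'\in\mt:z'\longleftrightarrow x\text{ and }z'\longleftrightarrow y\}$ is open around $z$ (since $b(z',\cdot)$ remains positive on any fixed witness), hence meets $G$ on a positive-$\mu$-measure set, allowing $z$ to be replaced by some $z'\in G$ preserving both adjacencies. Iterating converts any $\Gamma$-path in $\mt$ between two $G$-points into a path entirely in $G$, so $h$ is $\mu$-a.e.\ equal to some constant $\eta$ on $\mt$; the relation $h+h_*=0$ then gives $h=-\eta$ $\mu$-a.e.\ on $\mt_*$, and the identification $\eta=\eta_\mt$ follows from the averaging formula of part~(i).
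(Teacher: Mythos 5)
Your proof is correct and follows the same overall architecture as the paper's (openness of the four pieces in $\mk$ gives positive mass, integration of the dynamics gives conservation of $\eta_{\mt}$, and the equivalences in (ii) are closed through the dissipation $\D_{\mt}$), but the central step --- deducing from $\D_{\mt}[f]=0$ that $h$ is $\mu$-a.e.\ constant on $\mt$ --- is organized genuinely differently. The paper never works with pointwise values of $h$: it establishes the implication ``if $h=\eta_0$ $\mu$-a.e.\ on a neighborhood of $y$ and $b(y,y_*)>0$, then $h=-\eta_0$ $\mu$-a.e.\ on a neighborhood of $y_*$ (and conversely)'' and propagates this neighborhood statement along the \emph{exact} nodes of a $\Gamma$-path, so the null-set obstruction you identify never arises; separability of $\mt$ then finishes the argument. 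You instead fix a full-measure set $G$ of good points via Fubini, characterize adjacency as $\mu(V(x)\cap V(y))>0$ with $V(x):=\{x_*:\,b(x,x_*)>0\}$, and repair paths whose intermediate nodes fall outside $G$ by perturbing each such node into $G$ while preserving both adjacencies --- which works because the set of admissible replacements contains a neighborhood of the original node intersected with $\mk=\supp\mu$, hence has positive $\mu$-mass. Both mechanisms rest on exactly the same two ingredients (lower semicontinuity of $b$ and the support property of $\mu$); yours trades the paper's ``a.e.\ on a neighborhood'' bookkeeping for an explicit node-perturbation lemma, which is more graph-theoretic in flavor but requires the extra care you supply. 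Two points worth tightening in a write-up: the passage from ``$h=\eta$ a.e.\ on $\mt$'' to ``$h=-\eta$ a.e.\ on $\mt_*$'' needs a countable (Lindel\"of) subcover of $\mt_*$ by the open sets $V(x)$, $x\in G$, or alternatively Fubini in the other order together with $\int_{\mt}b(\cdot,x_*)\,\md\mu>0$ for every $x_*\in\mt_*$; and the step ``$\D_{\mt}[f]=0$ at one time yields a steady state'' implicitly uses uniqueness for the linear equation \eqref{lin-eq-h} to propagate the vanishing of $\partial_t h$ forward in time (the paper's corresponding step makes the same implicit appeal).
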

	\begin{proof} (i) By the openness of $\mt$ in $\mk$, for $x\in\mt$ there exists a ball $B(x,\delta)$ such that $B(x,\delta)\cap\mk$ is included in~$\mt\cap\mk$. Therefore
	\[
	  \rho \geqslant \int_{B(x,\delta)}\md\mu >0 \,,
	\]
	since $x$ is in the support of $\mu$. In the same way: $\rho_*>0$.\\
	Integration of \eqref{lin-eq-h} over $\mt$ and over $\mt_*$ gives
		\begin{equation*}
			\frac{\md}{\md t}\int_{\mt}h\,\md \mu = \frac{\md}{\md t}\int_{\mt_*}h\,\md \mu 
			=-2\iint_{\mt\times\mt_*} b(x,\xa)  (h+h_*) \md \mu_* \md \mu \,,
		\end{equation*}
		proving that $\eta_{\mt}$ is constant in time. The second and third equalities in \eqref{eq-eta} are due to the oddness of $h$ and the definition of $\mu$. Note that by connectedness $\mt$ and $\mt_*^\downarrow$ are either equal or disjoint and that the formulas hold in both cases. Finally, $\eta_{\mt} \in[-1,1]$ follows from $h\in[-1,1]$.\\
		(ii) If~$f|_{\mathcal{A}}$ is a steady state, then~$\mathcal{D}[f|_\mathcal{A}]=0$ and therefore~$\mathcal{D}_{\mt}[f]=0$. 
		If this is the case, then the quantity~$\int_{\mt_*}b(x,x_*)(h(x)+h(x_*))^2\md \mu_*$ is zero for~$\mu$-almost every~$x\in\mt$. Since~$\mt$ is of positive mass, then there exists such a~$x$ and we denote~$\eta_0=h(x)$, and therefore~$b(x,x_*)(\eta_0+h(x_*))^2=0$ for~$\mu$-almost every~$x_*$ in~$\mt_*$. Now if~$b(x,x_*)>0$, by lower semi-continuity there exists~$\delta>0$ such that for all~$y_*\in\mk\cap B(x_*,\delta)$ and for all~$y\in\mk\cap B(x,\delta)$, we have~$b(y,y_*)>0$ (and then~$y\in\mt$ and~$y_*\in\mt_*$). Therefore~$h(y_*)=-\eta_0$ for~$\mu$-almost every~$y_*\in B(x_*,\delta)\cap\mt_*$. And then we have
		\begin{align*}
			0& = \iint_{\mt\cap B(x,\delta)\times\mt_*\cap B(x_*,\delta)}b(y,y_*)(h(y)+h(y_*))^2\md \mu(y)\md \mu(y_*)\\
			&=\iint_{\mt\cap B(x,\delta)\times\mt_*\cap B(x_*,\delta)}b(y,y_*)(h(y)-\eta_0)^2\md \mu(y)\md \mu(y_*).
		\end{align*}
		Once more since the mass of~$\mt\cap B(x_*,\delta)$ is positive (this is the same as~$\mk\cap B(x_*,\delta)$ and~$\mk$ is the support of~$\mu$), there exists then a~$y_*\in B(x_*,\delta)$ such that~$\int_{\mt\cap B(x,\delta)}b(y,y_*)(h(y)-\eta_0)^2\md \mu(y)$ is zero. And from here we conclude that~$h=\eta_0$~$\mu$-almost everywhere on~$B(x,\delta)\cap\mt$. What we have in general is therefore the following: when~$b(y,y_*)>0$, if~$h=\eta_0$~$\mu$-a.e. on a neighborhood of~$y$, then~$h=-\eta_0$~$\mu$-a.e. on a neighborhood of~$y_*$ (and conversely). 
		
		We want to prove that~$h=\eta_0$ on~$\mt$ ($\mu$-a.e.). Since~$\mk$ is compact, then it is separable, and therefore~$\mt$ is also separable and we only need to prove that~$h=\eta_0$ ($\mu$ a.e.) in the neighborhood of any point of~$\mt$. We fix~$y\in\mt$ and we take a path~$(x=x_0,x_1,\dots,x_n=y)$ of consecutive adjacent elements in~$\mt$. By induction using the previous property, we get that~$h=\eta_0$~$\mu$-a.e on the neighborhood of~$x_i$ for all~$1\leqslant i\leqslant n$, and therefore in the neighborhood of~$y$. Finally we also have that for all~$y_*\in\mt_*$,~$h=-\eta_0$~$\mu$-a.e on the neighborhood of~$y_*$. Therefore we conclude that~$h=-\eta_0$ on~$\mt_*$ ($\mu$-a.e.), and we obtain~$\int_{\mt}h\md \mu-\int_{\mt_*}h\md \mu=(\rho+\rho_*)\eta_0$, and therefore we get~$\eta_0=\eta_{\mt}$. 
		
		Finally, if~$h$ has this form and~$A\subset\mt$ is a Borel set, integration of \eqref{lin-eq-h} over $A$ gives
		\begin{equation*}
			\frac{\md}{\md t}\int_{A}h\,  \md \mu =-2\iint_{A\times\mt_*} b(x,\xa)  (h+h_*) \md \mu_* \md \mu=0 \,.
		\end{equation*}
		We proceed similarly and get the same result when~$A$ is a Borel set included in~$\mt_*$, and by symmetry when~$A$ is included in~$\mt^\downarrow$ or~$\mt_*^\downarrow$. At the end, for any Borel set~$A\subset\mathcal{A}$, the integral~$\int_Ah\md \mu$ is constant in time, therefore~$f|_\mathcal{A}$ is a steady state.
	\end{proof}
	
	\begin{rem}\label{rem-five-cases}
		In Proposition~\ref{prop-decomposition} the connected components~$\mt$,~$\mt_*$,~$\mt^\downarrow$ and~$\mt_*^\downarrow$ do not need to be disjoint, and thus we have the following five possibilities:
		\begin{itemize}
			\item[(i)] The four sets are disjoint.
			\item[(ii)] $\mt=\mt_*^\downarrow\ne\mt_*=\mt^\downarrow$. In this case \eqref{eq-eta} simplifies to~$\eta_{\mt}=\frac{\int_\mt\md f_I}{\int_\mt\md \mu}-1$.
			\item[(iii)] $\mt=\mt_*\neq\mt^\downarrow=\mt_*^\downarrow$. 
			\item[(iv)] $\mt=\mt^\downarrow\neq\mt_*=\mt_*^\downarrow$. 
			\item[(v)] $\mt=\mt_*=\mt^\downarrow=\mt_*^\downarrow$.
		\end{itemize}
		In the last three cases~$(\mt\cup\mt_*^\downarrow)^\downarrow=\mt\cup\mt_*^\downarrow$ holds, and therefore~$\eta_{\mt}=0$.
	\end{rem}

As a consequence of Proposition \ref{prop-steadystates} we expect convergence of the solution of the reversal dynamics to the equilibrium
\begin{equation}\label{f-infty}
  f_\infty := (1+h_\infty)\mu := \left\{ \begin{array}{ll}
     f_I  & \mbox{ on all } \mt \mbox{ with empty } \mt_* \,,\\
     (1+\eta_{\mt})\mu  & \mbox{ on all } \mt \mbox{ with non-empty } \mt_* \,,\\ 0 & \mbox{ on } \ms\setminus\mk \,,
  \end{array}\right.
\end{equation}
with 
\[
   \eta_{\mt} = \frac{\int_{\mt\cup\mt_*^\downarrow}\md f_I - \int_{\mt_*\cup\mt^\downarrow}\md f_I}{\int_{\mt\cup\mt_*^\downarrow}\md f_I + \int_{\mt_*\cup\mt^\downarrow}\md f_I} \,,\qquad \mu = \frac{f_I + f_I^\downarrow}{2}\,.
\]
By the decomposition of the dynamics, the convergence analysis can be restricted to sets of the form $\mathcal{A}= \mt\cup\mt_*\cup\mt^\downarrow\cup\mt_*^\downarrow$. We start by introducing a relative entropy as a modification of~\eqref{c4:H},
which can also be motivated by the standard form of entropies for Markov processes with integrand
\[
   \frac{(f-f_\infty)^2}{f_\infty} = \frac{(h-\eta_{\mt})^2 \mu}{1+\eta_\mt} \,,\qquad \mbox{on } \mt\,.
\]
We shall use
	\begin{align}\label{def-HTf}
		\mathcal{H}_{\mt}[f]&=\frac12\int_\mt(h-\eta_{\mt})^2\md \mu + \frac12\int_{\mt_*}(h+\eta_{\mt})^2\md \mu\\
		&=\frac12\int_\mt h^2\md \mu + \frac12\int_{\mt_*}h^2\md \mu -\frac12(\rho+\rho_*)\eta_{\mt}^2 \,,\nonumber
	\end{align}
	with $\rho$ and $\rho_*$ as in Proposition \ref{prop-steadystates}. For odd functions $h$, this quantity controls the~$L^2(\mu)$ distance between~$h|_\mathcal{A}$ and its equilibrium. 
	The derivative in time of~$\mathcal{H}_{\mt}[f]$ can be computed as previously, and we obtain
	\begin{align}
		\frac{\md}{\md t} \h_{\mt}[f] 
		&=- 2\iint_{\mt\times\mt_*}  b(x,x_*) (h+\ha)^2 \, \md \mu \,\md \mu_* =: - \D_{\mt}[f]. \label{dthT}
	\end{align}
Our goal is to identify situations where the dissipation $\D_{\mt}$ controls the entropy $\mathcal{H}_{\mt}$. By the following result, this is true for the case where the collision kernel $b$ is bounded away from zero on~$\mt\times\mt_*$.
	\begin{proposition}\label{propEstimateHT}
		With the notation from Proposition \ref{prop-steadystates} we have
		\begin{equation*}
			2\min(\rho,\rho_*)\mathcal{H}_{\mt}[f]\leqslant\iint_{\mt\times\mt_*} (h+\ha)^2 \, \md \mu \,\md \mu_* \,.
		\end{equation*}
	\end{proposition}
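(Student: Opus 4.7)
My plan is to recenter $h$ separately on $\mt$ and $\mt_*$ so that the inequality becomes a short algebraic fact. I introduce the auxiliary functions $u := h - \eta_\mt$ on $\mt$ and $u_* := h + \eta_\mt$ on $\mt_*$. By the second line of \eqref{def-HTf} one has
$$2\mathcal{H}_\mt[f] = \int_\mt u^2 \md\mu + \int_{\mt_*} u_*^2 \md\mu,$$
while pointwise on $\mt\times\mt_*$ the shifts cancel, so $h + \ha = u + u_*$. Expanding by Fubini,
$$\iint_{\mt\times\mt_*}(h+\ha)^2 \md\mu\,\md\mu_* = \rho_* \int_\mt u^2 \md\mu + \rho \int_{\mt_*} u_*^2 \md\mu + 2\left(\int_\mt u\,\md\mu\right)\left(\int_{\mt_*} u_*\,\md\mu\right).$$

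The one identity I need to verify, and which is the conceptual heart of the estimate, is that the two factors in the cross-term agree. Using the definition \eqref{def:constT} of $\eta_\mt$, a direct computation yields
$$\int_\mt u\,\md\mu = \rho(\langle h\rangle_\mt - \eta_\mt) = \frac{\rho\rho_*(\langle h\rangle_\mt + \langle h\rangle_{\mt_*})}{\rho+\rho_*} = \rho_*(\langle h\rangle_{\mt_*} + \eta_\mt) = \int_{\mt_*} u_*\,\md\mu,$$
so the cross-term equals $2\bigl(\int_\mt u\,\md\mu\bigr)^2 \geqslant 0$ and may simply be dropped.

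Both sides of the claimed inequality, as well as the cross-term, are symmetric under swapping the roles of $\mt$ and $\mt_*$ (indeed $(\mt_*)_* = \mt$ by Proposition~\ref{prop-decomposition}(ii), and this swap only changes the sign of $\eta_\mt$, to which $\mathcal{H}_\mt[f]$ is insensitive). Hence I may assume without loss of generality that $\rho\leqslant\rho_*$, in which case the inequality reduces, after dropping the nonnegative cross-term, to
$$\rho\int_\mt u^2\,\md\mu + \rho\int_{\mt_*} u_*^2\,\md\mu \leqslant \rho_*\int_\mt u^2\,\md\mu + \rho \int_{\mt_*} u_*^2\,\md\mu,$$
which holds term by term. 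I do not expect any real obstacle: the entire content is the balancing identity $\int_\mt u\,\md\mu = \int_{\mt_*}u_*\,\md\mu$, which is precisely the Euler--Lagrange condition characterising $\eta_\mt$ as the minimiser of $\mathcal{H}_\mt[f]$ among all possible shifts.
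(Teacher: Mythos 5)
Your proof is correct and is essentially the paper's own argument in a slightly reorganized form: the paper expands $\rho_*\int_\mt(h-\eta_\mt)^2\md\mu+\rho\int_{\mt_*}(h+\eta_\mt)^2\md\mu$ and identifies the deficit as $-2\bigl(\tfrac{\rho\rho_*}{\rho+\rho_*}\bigr)^2(\langle h\rangle_\mt+\langle h\rangle_{\mt_*})^2$, which is exactly your nonnegative cross-term $2\bigl(\int_\mt u\,\md\mu\bigr)^2$ seen from the other side, and both rest on the same two identities for $\eta_\mt-\langle h\rangle_\mt$ and $\eta_\mt+\langle h\rangle_{\mt_*}$. (Your WLOG/symmetry step is unnecessary, since $\min(\rho,\rho_*)\leqslant\rho_*$ and $\min(\rho,\rho_*)\leqslant\rho$ already give the final bound term by term, and the display you invoke is the first, not the second, line of \eqref{def-HTf}.)
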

	\begin{proof}
		We compute the right-hand side of the desired inequality: 
		\begin{equation*}
			\iint_{\mt\times\mt_*} (h+\ha)^2 \, \md \mu \,\md \mu_*=\rho_*\int_{\mt}h^2\md \mu + \rho\int_{\mt_*}h^2\md \mu + 2 \rho\rho_*\langle h\rangle_\mt\langle h\rangle_{\mt_*}.
		\end{equation*}
		Now we expand an expression similar to the expression~\eqref{def-HTf} of~$\mathcal{H}_{\mt}[f]$, but with the weights~$\rho$ and~$\rho_*$ for the quadratic parts in~$h$:
		\begin{align*}
			\rho_*\int_\mt(h&-\eta_{\mt})^2\md \mu + \rho\int_{\mt_*}(h+\eta_{\mt})^2\md \mu\\
			&=\rho_*\int_{\mt}h^2\md \mu + \rho\int_{\mt_*}h^2\md \mu +2\rho\rho_*(\eta_{\mt}^2-\langle h\rangle_{\mt}\eta_{\mt}+\langle h\rangle_{\mt_*}\eta_{\mt})\\
			&=\rho_*\int_{\mt}h^2\md \mu + \rho\int_{\mt_*}h^2\md \mu +2\rho\rho_*(\eta_{\mt}-\langle h\rangle_{\mt})(\eta_{\mt}+\langle h\rangle_{\mt_*}) +2\rho\rho_*\langle h\rangle_\mt\langle h\rangle_{\mt_*}\\
			&=\iint_{\mt\times\mt_*} (h+\ha)^2 \, \md \mu \,\md \mu_* - 2\Big(\frac{\rho\rho_*}{\rho+\rho_*}\Big)^2(\langle h\rangle_{\mt}+\langle h\rangle_{\mt_*})^2,
		\end{align*}
		where we have used~$\eta_{\mt}-\langle h\rangle_\mt=-\frac{\rho_*}{\rho+\rho_*}(\langle h\rangle_{\mt}+\langle h\rangle_{\mt_*})$ and~$\eta_{\mt}+\langle h\rangle_{\mt_*}=\frac{\rho}{\rho+\rho_*}(\langle h\rangle_{\mt}+\langle h\rangle_{\mt_*})$.
		We then obtain
		\begin{equation*}
			\iint_{\mt\times\mt_*} (h+\ha)^2 \, \md \mu \,\md \mu_*\geqslant\rho_*\int_\mt(h-\eta_{\mt})^2\md \mu + \rho\int_{\mt_*}(h+\eta_{\mt})^2\md \mu\geqslant2\min(\rho,\rho_*)\mathcal{H}_{\mt}[f] \,,
		\end{equation*}
		and this ends the proof.
	\end{proof}
	
	It will turn out that the condition of boundedness away from zero of $b$ can be removed under the additional assumption that $\mt$ and $\mt_*$ are compact. An important tool will be a strengthening of the connectivity of $\mt$. As a preparation we note that the graph $\Gamma$, when restricted to a pair $(\mt,\mt_*)$ can also be seen as a connected \emph{bipartite} graph with edges $(x,x_*)$ in~$\mt\times\mt_*$ consisting of pairs of collision partners, i.e. $b(x,x_*)>0$. 
	
	\begin{Def} \label{defCoveringsGraph} Let $\mt$ be a connected component of $\Gamma$, let $\beta>0$, and let
	\[
	\mathcal{C}= \{T_1,\ldots,T_n,T_{*,1},\ldots,T_{*,n_*}\}\] 
	be a finite set of measurable sets such that
	\begin{align*}
	    \mt &= \bigcup_{i=1}^n T_i \,,\qquad 
	    \rho_i := \int_{T_i}\md\mu > 0 \,,\quad i=1,\ldots, n\,,\\
	   \mt_* &= \bigcup_{j=1}^{n_*} T_{*,j} \,,  \qquad 
	   \rho_{*,j} := \int_{T_{*,j}}\md\mu > 0 \,,\quad j=1,\ldots, n_*\,.
	\end{align*}
	We say that $T_i, T_{*,j}\in\mathcal{C}$ are \underline{$\beta$-linked}, iff
	\begin{equation*} 
		  b(x,x_*)\geqslant\beta \quad\text{ for all }(x,x_*)\in T_i\times T_{*,j} \,.
	\end{equation*}
	If the bipartite graph with the node set $\mathcal{C}$ and edges between $\beta$-linked nodes is connected, we call $\mathcal{C}$ a \underline{finite $\beta$-connected covering} of $\mt\cup\mt_*$.	
	\end{Def}
	
	A finite $\beta$-connected covering can be seen as a strengthened version of the bipartite graph mentioned in Definition \ref{defGraphGamma}. The following result (together with the previous lemma) shows that its existence implies the desired control of~$\mathcal{H}_\mt[f]$ by~$\mathcal{D}_\mt[f]$.
	
	\begin{lemma}\label{lemmaMainEstimate}
	Let $\mt$ be a connected component of $\Gamma$, let $\mt_*$ be non-empty, and let $\beta>0$. Assume there exists a finite $\beta$-connected covering $\mathcal{C}$ of $\mt\cup\mt_*$. Then there exists a constant~$C>0$ (only depending on~$\mu$,~$\beta$, and $\mathcal{C}$) such that
		\begin{equation*}
			\iint_{\mt\times\mt_*} (h+\ha)^2 \, \md \mu \,\md \mu_*\leqslant C\iint_{\mt\times\mt_*} b(x,x_*)(h+\ha)^2 \, \md \mu \,\md \mu_*.
		\end{equation*}
	\end{lemma}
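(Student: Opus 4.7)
The plan is to combine two ingredients: a direct control on $\beta$-linked pairs, and a telescoping/triangle-inequality argument along paths in the bipartite $\beta$-link graph to upgrade the local control to a global one on $\mt \times \mt_*$. Since the covering $\mathcal{C}$ is finite, the total estimate ultimately reduces to a finite sum of pairwise estimates.

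First, for any $\beta$-linked pair $(T_i, T_{*,j})\in\mathcal{C}$, the pointwise bound $b(x,x_*)\geqslant\beta$ on $T_i\times T_{*,j}$ gives the elementary inequality
\begin{equation*}
\iint_{T_i\times T_{*,j}}(h+h_*)^2\,\md\mu\,\md\mu_*\leqslant \frac{1}{\beta}\iint_{T_i\times T_{*,j}} b(x,x_*)(h+h_*)^2\,\md\mu\,\md\mu_*\leqslant \frac{1}{\beta}\D_\mt[f]/2,
\end{equation*}
using $\D_\mt[f]=2\iint_{\mt\times\mt_*}b(h+h_*)^2\md\mu\md\mu_*$. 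This handles every edge of the $\beta$-connected graph on $\mathcal{C}$.

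Second, for an arbitrary pair $(T_i,T_{*,j})$, I would invoke connectedness of the bipartite graph to choose a path in $\mathcal{C}$ of the form $T_i=T_{i_0}\leftrightarrow T_{*,j_1}\leftrightarrow T_{i_1}\leftrightarrow\cdots\leftrightarrow T_{*,j_{N}}=T_{*,j}$, with $\leftrightarrow$ denoting $\beta$-links. For any choice of intermediate points $u_\ell\in T_{i_\ell}$ and $v_{*,\ell}\in T_{*,j_\ell}$, the telescoping identity
\begin{equation*}
h(x)+h(x_*)=\bigl(h(x)+h(v_{*,1})\bigr)-\bigl(h(u_1)+h(v_{*,1})\bigr)+\bigl(h(u_1)+h(v_{*,2})\bigr)-\cdots+\bigl(h(u_{N-1})+h(x_*)\bigr)
\end{equation*}
expresses $h(x)+h(x_*)$ as a sum of at most $2N-1$ terms, each of the form $h(u)+h(v_*)$ with $(u,v_*)$ ranging over a $\beta$-linked pair in $\mathcal{C}$. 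The Cauchy--Schwarz inequality then bounds $(h(x)+h(x_*))^2$ by $(2N-1)$ times the sum of the squares of these terms. I would then average over the intermediate points $u_\ell,v_{*,\ell}$ with respect to $\mu/\rho_{i_\ell}$, resp.\ $\mu/\rho_{*,j_\ell}$, and finally integrate over $(x,x_*)\in T_i\times T_{*,j}$: each resulting summand becomes a constant multiple (involving only the masses $\rho_{i_\ell},\rho_{*,j_\ell}$ and $\rho_i,\rho_{*,j}$) of an integral $\iint_{T_{i_{\ell-1}}\times T_{*,j_\ell}}(h+h_*)^2\md\mu\md\mu_*$ over a $\beta$-linked pair, which is controlled by the first step.

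Finally, summing the resulting inequalities over all $(i,j)\in\{1,\dots,n\}\times\{1,\dots,n_*\}$ and noting that $\mt\times\mt_*=\bigcup_{i,j}T_i\times T_{*,j}$ (with overlaps only increasing the left-hand side by a bounded combinatorial factor), I obtain the desired estimate with a constant $C$ depending only on $\beta$, the masses $\rho_i,\rho_{*,j}$, and the maximal path length in the bipartite graph on $\mathcal{C}$ — all of which are determined by $\mu$, $\beta$, and $\mathcal{C}$. The main obstacle is the bookkeeping of constants along paths of varying length; this is handled cleanly by fixing, for each pair $(i,j)$, a shortest connecting path (of length at most $n+n_*-1$), giving a uniform bound on $N$.
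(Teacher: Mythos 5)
Your proposal is correct and follows essentially the same route as the paper's proof: decompose $\mt\times\mt_*$ into the pieces $T_i\times T_{*,j}$, telescope $h(x)+h(x_*)$ along a path of $\beta$-linked pairs, apply Cauchy--Schwarz, average over the intermediate points with the normalized measures $\md\mu/\rho_{i_\ell}$, and absorb the factor $1/\beta$ using the $\beta$-link lower bound on $b$. The resulting constant, depending on $\beta$, the masses $\rho_i,\rho_{*,j}$, and the path lengths, matches the paper's.
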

	\begin{proof}
		We have 
		\begin{equation}
			\iint_{\mt\times\mt_*} (h+\ha)^2 \, \md \mu \,\md \mu_*\leqslant\sum_{i=1}^n \sum_{j=1}^{n_*}\iint_{T_i\times T_{*,j}} (h+\ha)^2 \, \md \mu \,\md \mu_* \,.
			\label{estimate_sumsij}
		\end{equation}
	Concentrating on one pair $(i,j)$, there exists a path of $\beta$-links connecting $T_i$ and $T_{*,j}$, given by the index sequence~$(i=i_0,j_0,i_1,j_1,\dots,i_k,j_k=j)$. With~$x_{i_\ell}\in T_{i_\ell}$ and~$x_{*,j_\ell}$ in~$T_{*,j_\ell}$ for all~$0\leqslant\ell\leqslant k$ we have
		\begin{align*}
			(h(x_{i_0})&+h(x_{*,j_k}))^2=\left(\sum_{\ell=0}^k(h(x_{i_\ell})+h(x_{*,j_\ell}))-\sum_{\ell=0}^{k-1}(h(x_{i_{\ell+1}})+h(x_{*,j_\ell}))\right)^2\\
			&\leqslant(2k+1)\left(\sum_{\ell=0}^k(h(x_{i_\ell})+h(x_{*,j_\ell}))^2+\sum_{\ell=0}^{k-1}(h(x_{i_{\ell+1}})+h(x_{*,j_\ell}))^2\right)\\
			&\leqslant\frac{2k+1}{\beta}\left(\sum_{\ell=0}^kb(x_{i_\ell},x_{*,j_\ell})(h(x_{i_\ell})+h(x_{*,j_\ell}))^2+\sum_{\ell=0}^{k-1}b(x_{i_{\ell+1}},x_{*,j_\ell})(h(x_{i_{\ell+1}})+h(x_{*,j_\ell}))^2\right) \,.
		\end{align*}
		Now we integrate against~$\md \mu(x_{i_0})\cdots \md\mu(x_{*,j_k})$ over~$T_{i_0}\times\hdots\times T_{*,j_k}$ and obtain
		(after division by $\rho_{i_0}\cdots\rho_{*,j_k}$)
		\begin{align*}
			&\frac{1}{\rho_{i_0}\rho_{*,j_k}}\iint_{T_i\times T_{*,j}}(h+h_*)^2\, \md \mu \,\md \mu_*\\
			&\hspace{1em}\leqslant\frac{2k+1}{\beta}\Biggl(\sum_{\ell=0}^k\frac{1}{\rho_{i_\ell}\rho_{*,j_\ell}}\iint_{T_{i_\ell}\times T_{*,j_\ell}}\hspace{-1em}b(x,x_*)(h+h_*)^2\, \md \mu \,\md \mu_* \\
			&\hspace{2em}+\sum_{\ell=0}^{k-1}\frac{1}{\rho_{i_{\ell+1}}\rho_{*,j_\ell}}\iint_{T_{i_{\ell+1}}\times T_{*,j_\ell}}\hspace{-1em}b(x,x_*)(h+h_*)^2\, \md \mu \,\md \mu_*\Biggr)\,,
		\end{align*}
Since~$T_{i_\ell}\subset\mt$ and~$T_{*,j_\ell}\subset\mt_*$ for all~$0\leqslant\ell\leqslant k$, we finally obtain
		\begin{equation*}
			\begin{split}
			&\iint_{T_{i}\times T_{*,j}}(h+h_*)^2\, \md \mu \,\md \mu_* \\
			&\hspace{1em} \leqslant\frac{2k+1}{\beta}\left(\sum_{\ell=0}^k\frac{\rho_{i_0}\rho_{*,j_k}}{\rho_{i_\ell}\rho_{*,j_\ell}}+\sum_{\ell=0}^{k-1}\frac{\rho_{i_0}\rho_{*,j_k}}{\rho_{i_{\ell+1}}\rho_{*,j_\ell}}\right)\iint_{\mt\times\mt_*}(x,x_*)(h+h_*)^2\, \md \mu \,\md \mu_* \,.  
			\end{split}
		\end{equation*}
		Summation with respect to $i$ and $j$ and using~\eqref{estimate_sumsij} completes the proof.
	\end{proof}
	
	Finally, it remains to provide sufficient conditions for the existence of a finite $\beta$-connected covering.
	
	\begin{proposition} \label{propExistCoverings} 
	Let $\mt$ be a connected component of $\Gamma$, let $\mt_*$ be non-empty, let both sets be compact, and let $b$ be lower semicontinuous. Then there exists~$\beta>0$ and a finite $\beta$-connected covering of $\mt\cup\mt_*$.
	\end{proposition}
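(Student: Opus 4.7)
The plan is a two-stage construction. First, lower semicontinuity of $b$ yields, around every pair $(x,x_*)\in\mt\times\mt_*$ with $b(x,x_*)>0$, an open product neighborhood $U(x,x_*)\times V(x,x_*)\subset\mk\times\mk$ and a positive constant $\alpha(x,x_*)$ on which $b\geqslant\alpha(x,x_*)$. Since $\mt_*$ is non-empty, Proposition~\ref{prop-decomposition} gives $(\mt_*)_*=\mt$, so every point of $\mt$ has a collision partner and the projections $U(x,x_*)$ form an open cover of $\mt$; compactness of $\mt$ extracts a finite subfamily, to which I attach the paired $V$-sets. The analogous argument starting from $\mt_*$ produces further finitely many rectangles whose second projections cover $\mt_*$. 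Combining yields finitely many pairs $(U_k,V_k)$ with positive $\alpha_k$ such that $\bigcup_k U_k\supset\mt$, $\bigcup_k V_k\supset\mt_*$, and $b\geqslant\alpha_k$ on $U_k\times V_k$. After intersecting with $\mt$ or $\mt_*$, each piece is open in $\mk$ around a point of $\mk=\supp\mu$, hence carries strictly positive $\mu$-mass, and each initial cover element is already $\alpha$-linked to at least one element on the other side (its matching partner).

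The bipartite graph built so far may well be disconnected, so I would fix $U_1$ as a base vertex and attach every other cover element to it. Given a second base point $x_k$, connectedness of $\mt$ in $\Gamma$ provides a finite adjacency chain $x_1=z_0\longleftrightarrow z_1\longleftrightarrow\cdots\longleftrightarrow z_N=x_k$ with common partners $w_\ell\in\mt_*$ of $z_\ell$ and $z_{\ell+1}$. For each $\ell$, two applications of LSC at $(z_\ell,w_\ell)$ and $(z_{\ell+1},w_\ell)$ followed by intersection of the two $w_\ell$-neighborhoods produce a single open $V^\circ_\ell\ni w_\ell$ and open sets $U^-_\ell\ni z_\ell$, $U^+_\ell\ni z_{\ell+1}$ on which $b$ is uniformly positive on both $U^\pm_\ell\times V^\circ_\ell$. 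Further intersecting $U^+_\ell\cap U^-_{\ell+1}$ at each interior junction and $U_1\cap U^-_0$, $U_k\cap U^+_{N-1}$ at the endpoints yields open neighborhoods of $z_{\ell+1}$, $x_1$, $x_k$ respectively (so again of positive $\mu$-mass) that are simultaneously $\beta$-linked to two consecutive $V^\circ$'s, or to a $V^\circ$ and the original matching partner. Adding these finitely many sets creates an explicit chain of $\beta$-links from $U_1$ to $U_k$; repeating over the finitely many other initial cover elements, and observing that $\mt_*$-side elements automatically connect through their matching $\mt$-side partners, yields a connected bipartite graph. Taking $\beta$ as the minimum of all the finitely many positive $\alpha$'s arising throughout then finishes the construction.

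The main obstacle is the tension between the pointwise notion of adjacency in $\Gamma$ — which only requires $b(x,x_*)>0$ — and the uniform notion of $\beta$-linking, which demands a positive lower bound of $b$ on whole rectangles of sets. Lower semicontinuity bridges this gap locally, but turning a pointwise adjacency chain into a chain of $\beta$-links forces the two kinds of intersections described above: on the $V$-side, so that two $U$-neighborhoods attach to a common $V^\circ_\ell$; and on the $U$-side, so that two consecutive $V^\circ_\ell$ and $V^\circ_{\ell+1}$ share a $U$-neighborhood linked to both. Finiteness of the whole construction — finitely many base points, finitely many paths of finite length — is what guarantees the final $\beta>0$.
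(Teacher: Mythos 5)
Your argument is correct, but it follows a genuinely different route from the paper's. The paper works uniformly over the compact product $\mt\times\mt_*$: it introduces the function $\bar\beta(x,y)$ (the best worst-edge value over all connecting paths from $x$ to $y$), shows it is lower semicontinuous and hence attains a positive minimum $\bar\beta_0$, then introduces a second function $\bar\delta(x,y)$ measuring how far the nodes of a $\beta$-good path may be perturbed while keeping all edges above $\beta$, shows that this too is lower semicontinuous with positive minimum $\bar\delta_0$, and concludes that \emph{any} finite covering of $\mt$ and $\mt_*$ by relatively open sets of diameter less than $\delta<\bar\delta_0$ is automatically $\beta$-connected. You instead build the covering by hand: a finite subcover of LSC product rectangles (each internally $\alpha$-linked to its matching partner), followed by finitely many explicit bridge sets obtained by shrinking neighborhoods along an adjacency chain from a fixed base element to each of the other cover elements, with $\beta$ taken as the minimum of the finitely many constants produced. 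Your construction is sound — the bridge sets are relatively open neighborhoods of points of $\supp\mu$ inside $\mt$ or $\mt_*$ and hence have positive mass, each is $\beta$-linked to the two consecutive $V^\circ$-sets (or to a $V^\circ$ and a matching partner), and the $\mt_*$-side elements connect through their paired $\mt$-side rectangles — and finiteness of the whole bookkeeping is indeed what rescues a positive $\beta$ without any uniform estimate. What the paper's approach buys is exactly that uniformity: a single perturbation radius valid for all pairs, so the covering can be chosen generically (any sufficiently fine one works) rather than adapted to finitely many chosen chains; what your approach buys is the avoidance of the two auxiliary lower semicontinuity/minimization lemmas, at the price of a fussier, chain-by-chain assembly of the covering.
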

	\begin{proof}
	We work with the connected bipartite graph with the node set $\mt\cup\mt_*$ and edges between collision partners $x\in\mt$
	and $x_*\in\mt_*$. For $(x,y)\in\mt\times\mt_*$ we denote by~$P(x,y)$ the set of connecting paths of the form~$p=(x=x_0,y_0,x_1,\ldots,x_k,y_k=y)$. For such a path~$p$ we denote
	\[
	  \beta_p := \min\big(\{b(x_i,y_i), 0\leqslant i\leqslant k\}\cup\{b(x_{i+1},y_i), 0\leqslant i\leqslant k-1\} \big)\,,
	\]
	and we define
	\[
	  \bar\beta(x,y) := \sup\{\beta_p:\, p\in P(x,y)\} \,.
	\]
	By connectedness we have $\bar\beta(x,y)>0$, $(x,y)\in\mt\times\mt_*$.
	
		Let us prove that~$(x,y)\mapsto\bar{\beta}(x,y)$ is lower semicontinuous on~$\mt\times\mt_*$. Indeed, if~$\varepsilon>0$ there exists a path $p\in P(x,y)$ such that $\beta_p > \bar\beta(x,y)-\varepsilon$. In particular, since~$b$ is lower semicontinuous, there exists~$r>0$ such that for all~$\widetilde{x}\in B(x,r)\cap\mt$ and for all~$\widetilde{y}\in B(y,r)\cap\mt_*$, we have~$b(\widetilde{x},y_0)>\bar{\beta}(x,y)-\varepsilon$ and~$b(x_k,\widetilde{y})>\bar{\beta}(x,y)-\varepsilon$. Therefore with the path~$(\widetilde{x},y_0,\dots,x_k,\widetilde{y})$, we obtain that~$\bar{\beta}(\widetilde{x},\widetilde{y})\geqslant\bar{\beta}(x,y)-\varepsilon$. Therefore~$\bar{\beta}$ is lower semicontinuous on~$\mt\times\mt_*$ and reaches there its minimum~$\bar{\beta}_0>0$. 
		
		We now fix~$0<\beta<\bar{\beta}_0$, and for any~$x,y\in\mt\times\mt_*$ we define~$\bar{\delta}(x,y)$ the supremum of possible~$0<\delta\leqslant\mathrm{diam}(\mk)$ such that there exists a path~$p\in P(x,y)$ for which the property as above is valid even when moving all the points by less than~$\delta$: if~$p=(x=x_0,y_0,x_1,y_1,\dots,x_k,y_k=y)$ then for all $(\widetilde{x}_0,\widetilde{y}_0,\dots,\widetilde{x}_k,\widetilde{y}_k)$ such that~$\widetilde{x}_i\in B(x_i,\delta)\cap\mt$ and~$\widetilde{y}_i\in B(y_i,\delta)\cap\mt_*$ we have the estimation~$b(\widetilde{x}_i,\widetilde{y}_i)>\beta$ for all~$0\leqslant i\leqslant k$ and~$b(\widetilde{y}_i,\widetilde{x}_{i+1})>\beta$ for all~$0\leqslant i<k$. By lower semicontinuity of~$b$, such a positive~$\delta$ exists for all~$x,y$ in~$\mt\times\mt_*$. 
		
		Let us now prove that~$\bar{\delta}$ is lower semicontinuous. If for a given~$\delta>0$ we have a path as described above between~$x\in\mt$ and~$y\in\mt_*$, and~$d(\widetilde{x},x)<\varepsilon$ and~$d(\widetilde{y},y)<\varepsilon$ for~$\varepsilon<\delta$, then~$B(\widetilde{x},\delta-\varepsilon)\subset B(x,\delta)$ and~$B(\widetilde{y},\delta-\varepsilon)\subset B(y,\delta)$, therefore taking the same path, we have the same property with radius~$\delta-\varepsilon$. This shows that~$\bar{\delta}(\widetilde{x},\widetilde{y})\geqslant\bar{\delta}(x,y)-\varepsilon$. Therefore~$\bar{\delta}$ is lower semicontinuous and reaches its minimum~$\bar{\delta}_0>0$ on~$\mt\times\mt_*$. So if we fix now~$\delta<\bar{\delta}_0$, we have the property of existence of a path as above, uniformly in~$x$ and~$y$.
		
		We now take a finite covering of the compact set~$\mt$ (resp.~$\mt_*$) by open sets~$T_i$ (resp.~$T_{*,j}$) of diameter less than~$\delta$. By Proposition~\ref{prop-steadystates}, since~$\mt$ and~$\mt_*$ are open in~$\mk$, without loss of generality (replacing~$T_i$ by~$T_i\cap\mt$ for instance) we can assume that the open sets~$T_i$ (resp.~$T_{*,j}$) are subsets of~$\mt$ (resp.~$\mt_*$), to be in the framework of Definition~\ref{defCoveringsGraph}.
		
		It remains to show that~$\mathcal{C}:= \{T_i\}\cup\{T_{*,j}\}$ is $\beta$-connected. It is sufficient to show that two arbitrary vertices of different type~$T_i$ and~$T_{*,j}$ with~$\rho_i>0$ and~$\rho_{*,j}>0$ are connected (since~$\mt$ and~$\mt_*$ are not empty, we have~$\rho,\rho_*>0$ by Proposition~\ref{prop-steadystates}, so there is at least one vertex of each type with~$\rho_i>0$ and~$\rho_{*,j}>0$). We take~$x\in T_i$ and~$y\in T_{*,j}$ and we obtain a path~$(x=x_0,y_0,x_1,y_1,\dots,x_k,y_k=y)$ as previously, such that 
		for all $(\widetilde{x}_0,\widetilde{y}_0,\dots,\widetilde{x}_k,\widetilde{y}_k)$ such that~$\widetilde{x}_\ell\in B(x_\ell,\delta)\cap\mt$ and~$\widetilde{y}_\ell\in B(y_\ell,\delta)\cap\mt_*$ we have the estimate~$b(\widetilde{x}_\ell,\widetilde{y}_\ell)>\beta$ for all~$0\leqslant\ell\leqslant k$ and~$b(\widetilde{y}_\ell,\widetilde{x}_{\ell+1})>\beta$ for all~$0\leqslant\ell<k$. We use the coverings to find~$i_\ell$ and~$j_\ell$ for~$0\leqslant\ell\leqslant k$ such that~$x_\ell\in T_{i_\ell}$ and~$y_\ell\in T_{*,j_\ell}$ for all~$0\leqslant\ell\leqslant k$ (with~$i_0=i$ and~$j_k=j$). Since~$T_{i_\ell}$ is open in~$\mk$, it follows that~$\rho_{i_\ell}>0$ by definition of the support~$\mk$ of~$\mu$. Similarly~$\rho_{*,j_\ell}>0$.
		Now since the diameter of~$T_{i_\ell}$ (resp.~$T_{*,j_\ell}$) is less than~$\delta$, we obtain that~$T_{i_\ell}\subset B(x_\ell,\delta)$ (resp.~$T_{*,j_\ell}\subset B(y_\ell,\delta)$), and therefore we obtain that for all~$\widetilde{x}_\ell\in T_{i_\ell}$ and~$\widetilde{y}_\ell\in T_{*,j_\ell}$, we have~$b(\widetilde{x}_\ell,\widetilde{y}_\ell)>\beta$ which means that~$T_{i_\ell}$ and~$T_{*,j_\ell}$ are~$\beta$-linked. Similarly, we have that~$T_{*,j_\ell}$ and~$T_{i_{\ell+1}}$ are~$\beta$-linked for~$0\leqslant\ell<k$. Therefore we have a path in the graph~$\mathcal{C}$ from~$T_i=T_{i_0}$ to~$T_{*,j}=T_{*,j_k}$, and this ends the proof.
	\end{proof}
	
	Finally, combining Lemma~\ref{lemmaMainEstimate} and Propositions~\ref{propEstimateHT} and~\ref{propExistCoverings} with the expression~\eqref{dthT} of the dissipation of~$\mathcal{H}_{\mt}[f]$, we obtain, when the connected components~$\mt$ and~$\mt_*$ are compact and not empty or, alternatively, when $b$ is bounded away from zero on~$\mt\times\mt_*$, that there exists a constant~$\lambda>0$ (only depending on~$\mu$,~$b$ and~$\mathcal{T}$, and not on the initial condition~$h_I$) such that
	\begin{equation}\label{HT-diss}
		\frac{\md}{\md t}\mathcal{H}_{\mt}[f]\leqslant-\lambda\mathcal{H}_{\mt}[f].
	\end{equation}
	Therefore we have exponential decay on~$\mathcal{A}$ of the solution towards the steady-state on~$\mathcal{A}$ (given by~$(1+\eta_\mt)\mu$ on~$\mt\cup\mt_*^{\downarrow}$ and~$(1-\eta_\mt)\mu$ on~$\mt_*\cup\mt^\downarrow$. The main theorem of this section summarizes these results and provides a case where the number of connected components is finite (and they are all compact), and therefore we have exponential convergence to the steady state.
	
	\begin{theorem} \label{thm-asymptotic-behaviour}
Let~$f_I$ be a probability measure on~$\ms$ and~$\mu=\frac12(f_I+f_I^\downarrow)$ with support $\mk$. Let the collision kernel~$b$ be lower semicontinuous and bounded, let the graph~$\Gamma$ be as in Definition~\ref{defGraphGamma}, and let~$\mk=\mk_*$ (i.e., every element of~$\mk$ has a collision partner in~$\mk$). \\
Then~$\Gamma$ has a finite number of connected components, which are all compact and the solution~$f(t)$ of the reversal collision dynamics given by Theorem~\ref{thm-existence-uniqueness} converges as $t\to\infty$ exponentially in total variation distance to $f_\infty$, given by \eqref{f-infty}, i.e., there exist $C,\lambda>0$, such that
\[
  \sup_{A\in\sigma(\ms)} \left| \int_A \md f(t) - \int_A \md f_\infty\right| \leqslant C e^{-\lambda t} \,,\qquad t\geqslant 0 \,.
\]
\end{theorem}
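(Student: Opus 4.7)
The strategy is to combine the per-component decay machinery already established (Propositions~\ref{propEstimateHT}, \ref{propExistCoverings} and Lemma~\ref{lemmaMainEstimate}) with a compactness argument yielding finiteness of the connected components, and then to translate the resulting $L^2(\mu)$ decay of $h - h_\infty$ into a total variation bound via Cauchy--Schwarz.

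First I would establish that $\Gamma$ has only finitely many connected components and that each is compact. The assumption $\mk=\mk_*$ guarantees that for every connected component $\mt$ the set $\mt_*$ is nonempty, so case (ii) of Proposition~\ref{prop-decomposition} applies and each component is open in $\mk$. Since the components partition $\mk$ into relatively open sets, each component is also relatively closed (its complement being the union of the other open components). By compactness of $\mk$ the components are compact, and moreover a compact space cannot be written as a disjoint union of infinitely many nonempty relatively open sets (which would be an open cover with no finite subcover refining the partition). Hence there are finitely many components $\mt_1,\dots,\mt_N$; selecting representatives we obtain finitely many disjoint sets $\mathcal{A}_k=\mt_{i_k}\cup\mt_{i_k}^\downarrow\cup\mt_{i_k,*}\cup\mt_{i_k,*}^\downarrow$ partitioning $\mk$.

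Next, for each representative $\mt_{i_k}$, Proposition~\ref{propExistCoverings} supplies some $\beta_k>0$ together with a finite $\beta_k$-connected covering of $\mt_{i_k}\cup\mt_{i_k,*}$. Feeding this into Lemma~\ref{lemmaMainEstimate} and Proposition~\ref{propEstimateHT} produces a constant $\lambda_k>0$, depending only on $\mu$, $b$, and $\mt_{i_k}$, for which $\mathcal{D}_{\mt_{i_k}}[f]\geqslant\lambda_k\mathcal{H}_{\mt_{i_k}}[f]$. The dissipation identity \eqref{dthT} and Grönwall's lemma then give $\mathcal{H}_{\mt_{i_k}}[f(t)]\leqslant\mathcal{H}_{\mt_{i_k}}[f_I]\,e^{-\lambda_k t}$; setting $\lambda=\min_k\lambda_k>0$, which is positive precisely because the index $k$ runs over a finite set, yields exponential decay of $\sum_k\mathcal{H}_{\mt_{i_k}}[f(t)]$ at rate $\lambda$.

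Finally, I would identify $f_\infty=(1+h_\infty)\mu$ with $h_\infty=\eta_{\mt_{i_k}}$ on $\mt_{i_k}\cup\mt_{i_k,*}^\downarrow$ and $h_\infty=-\eta_{\mt_{i_k}}$ on $\mt_{i_k,*}\cup\mt_{i_k}^\downarrow$, and verify using the oddness of $h-h_\infty$ and the evenness of $\mu$ (and checking each of the five configurations in Remark~\ref{rem-five-cases}) that $\int_{\mathcal{A}_k}(h-h_\infty)^2\md\mu\leqslant C_k\,\mathcal{H}_{\mt_{i_k}}[f]$ for some explicit constant $C_k$. Summing over $k$ and using that the $\mathcal{A}_k$ partition $\mk$ (and that $h-h_\infty$ vanishes $\mu$-a.e.\ outside $\mk$) gives $\int_{\ms}(h(t)-h_\infty)^2\md\mu\leqslant Ce^{-\lambda t}$. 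The passage to total variation is then immediate: for any Borel set $A$, Cauchy--Schwarz together with $\mu(\ms)=1$ yields
\[
\left|\int_A\md f(t)-\int_A\md f_\infty\right|=\left|\int_A(h-h_\infty)\md\mu\right|\leqslant\left(\int_{\ms}(h-h_\infty)^2\md\mu\right)^{1/2},
\]
so $f(t)\to f_\infty$ exponentially at rate $\lambda/2$ in total variation. The main obstacle is the first step: the finiteness of connected components is where the hypothesis $\mk=\mk_*$ is genuinely used (to rule out isolated nodes, for which openness in $\mk$ can fail), and combining this topological fact with the uniform rate $\lambda=\min_k\lambda_k$ is what allows the per-component exponential bounds to be glued into a global one.
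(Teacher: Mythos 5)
Your proposal is correct and follows essentially the same route as the paper: finiteness and compactness of the components via openness in $\mk$ (using $\mk=\mk_*$ to exclude isolated nodes) plus compactness of $\mk$, then the per-component entropy--dissipation inequality from Proposition~\ref{propExistCoverings}, Lemma~\ref{lemmaMainEstimate} and Proposition~\ref{propEstimateHT}, a minimum over the finitely many rates, and Cauchy--Schwarz to pass from $L^2(\mu)$ to total variation. The only cosmetic difference is that you group the components into quadruples $\mathcal{A}_k$ where the paper groups them into pairs $(\mt,\mt_*)$, which changes nothing of substance.
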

	
\begin{proof} 
The fact that every point in $\mk$ has a collision partner excludes case (i) of Proposition~\ref{prop-decomposition}. This implies that $\mk$ is the disjoint union of open sets $\mt$ in $\mk$, the connected components of~$\Gamma$. By compactness of $\mk$, this covering is finite, and all components are closed, and therefore compact in $\ms$.

For estimating the total variation distance we observe that for $A\in\sigma(\ms)$ we have
\begin{equation*}
 \left(\int_A \md f(t) - \int_A \md f_\infty\right)^2 =
    \left(\int_A (h(t)-h_\infty)\md\mu\right)^2 \leqslant
    \int_{\mk} (h(t)-h_\infty)^2 \md\mu = 
    \sum_{\mt} \int_{\mt} (h(t)-\eta_{\mt})^2 \md\mu
\end{equation*}
In the finite sum on the right hand side we introduce a grouping into pairs of the form $(\mt,\mt_*)$, whence the terms in the sum take the form $\mathcal{H}_{\mt}[f]$ (since $\eta_{\mt_*} = -\eta_{\mt}$). Lemma~\ref{lemmaMainEstimate} and Propositions~\ref{propEstimateHT} and~\ref{propExistCoverings} imply a differential inequality of the form \eqref{HT-diss} for each $\mathcal{H}_{\mt}[f]$ and therefore its exponential
decay. 
	\end{proof}
	
The following corollary shows that Theorem \ref{thm-asymptotic-behaviour} covers the case of our motivating example~\eqref{c4:REVT} on the circle.
	\begin{cor}\label{corollary-main-theorem}
		Let the assumption of Theorem \ref{thm-asymptotic-behaviour} hold, but with $\mk=\mk_*$ replaced by~$b(x,x^\downarrow)>0$ for all~$x\in\ms$. Then the conclusions of Theorem \ref{thm-asymptotic-behaviour} are valid. Furthermore each connected component~$\mt$ of $\Gamma$ satisfies $\mt_*=\mt^\downarrow$, and therefore we have either Case (ii) or Case (v)
		of Remark \ref{rem-five-cases}.
	\end{cor}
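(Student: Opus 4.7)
The plan is to reduce the corollary to Theorem~\ref{thm-asymptotic-behaviour} by verifying its hypothesis~$\mk=\mk_*$, and then to sharpen the description of connected components using the structural identity~$\mt_*=\mt^\downarrow$.

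First, I will show that~$\mk=\mk_*$, so that Theorem~\ref{thm-asymptotic-behaviour} applies directly and delivers the quantitative conclusions. Since~$\mu=\frac12(f_I+f_I^\downarrow)$ is by construction invariant under the involution, its support satisfies~$\mk=\mk^\downarrow$. Hence for any~$x\in\mk$ the reversed point~$x^\downarrow$ also lies in~$\mk$, and since by hypothesis~$b(x,x^\downarrow)>0$, it is a collision partner of~$x$ belonging to~$\mk$. By Definition~\ref{defGraphGamma}(i) applied with~$\mt=\mk$, this yields~$x\in\mk_*$, so~$\mk\subseteq\mk_*$. The reverse inclusion is built into the definition of~$\mk_*$ as a subset of~$\mk$, and therefore Theorem~\ref{thm-asymptotic-behaviour} applies verbatim.

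Second, I will show~$\mt_*=\mt^\downarrow$ for every connected component~$\mt$ of~$\Gamma$. Since~$\mk=\mk_*$, case (i) of Proposition~\ref{prop-decomposition} is excluded, so~$\mt_*$ is non-empty and Proposition~\ref{prop-decomposition}(ii) applies, giving in particular~$(\mt_*)_*=\mt$. For any~$x\in\mt$, the reversed point~$x^\downarrow$ lies in~$\mk$ by symmetry of~$\mu$ and is a collision partner of~$x\in\mt$, so~$x^\downarrow\in\mt_*$; this gives~$\mt^\downarrow\subseteq\mt_*$. Conversely, for any~$x_*\in\mt_*$ the point~$(x_*)^\downarrow\in\mk$ is a collision partner of~$x_*\in\mt_*$, so~$(x_*)^\downarrow\in(\mt_*)_*=\mt$ and therefore~$x_*\in\mt^\downarrow$. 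Hence~$\mt_*=\mt^\downarrow$.

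Finally, the identity~$\mt_*=\mt^\downarrow$ (and consequently~$\mt_*^\downarrow=\mt$) collapses the five possibilities listed in Remark~\ref{rem-five-cases} to two: either~$\mt\neq\mt^\downarrow$, in which case we are precisely in case (ii) with~$\mt=\mt_*^\downarrow\neq\mt_*=\mt^\downarrow$, or~$\mt=\mt^\downarrow$, in which case all four sets coincide and we are in case (v). There is no genuine obstacle here: the argument is essentially a careful unpacking of the definitions of~$\mt_*$ and~$\mk_*$, the decisive input being that~$\mu$ is invariant under the involution so that reversed points always remain in~$\mk$, which then combines with the pointwise hypothesis~$b(x,x^\downarrow)>0$.
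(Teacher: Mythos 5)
Your proof is correct and follows essentially the same route as the paper: symmetry of~$\mu$ gives~$\mk=\mk^\downarrow$, the hypothesis~$b(x,x^\downarrow)>0$ then yields~$\mk=\mk_*$ so Theorem~\ref{thm-asymptotic-behaviour} applies, and~$\mt^\downarrow\subseteq\mt_*$ follows as in the paper. The only (harmless) variation is in the reverse inclusion: the paper concludes~$\mt^\downarrow=\mt_*$ from the fact that both are connected components of~$\Gamma$, whereas you deduce it from~$(\mt_*)_*=\mt$ via the involution; both rest on Proposition~\ref{prop-decomposition}(ii) and are equally valid.
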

	\begin{proof} Since~$\mu$ is symmetric, $\mk$ is symmetric. So if~$x\in\mk$,~$x^\downarrow$ is a collision partner of~$x$ which belongs to~$\mk$. Therefore~$\mk=\mk_*$ and the conclusions of Theorem~\ref{thm-asymptotic-behaviour} hold. We also have~$\mt^\downarrow\subset\mt_*$ (and thus~$\mt^\downarrow=\mt_*$ since both are connected component of the graph~$\Gamma$) and the only possibilities from Remark~\ref{rem-five-cases} are (ii) and~(v).
	\end{proof}
	
\begin{rem}
An example, showing that without the condition $\mk=\mk_*$, the number of connected components may be infinite: let $\ms = [-1,1]$ with~$x^\downarrow=-x$, 
	\[
	   \mk = \mk_+ \cup \mk_+^\downarrow \qquad\mbox{with }
	   \mk_+ = \{0\} \cup \bigcup_{k=1}^\infty \mt_k \,,\quad
	   \mt_k = \left[ \frac{1}{2k+1}, \frac{1}{2k}\right] \,,
	\]
	and 
	\[
	  b(x,x_*)>0 \qquad\mbox{iff } x<x_*+x_*^2 \mbox{ and }
	  x_* < x + x^2 \qquad \mbox{on } [0,1]^2 \,,
	\]
	with $b=0$ on~$[-1,0]\times[0,1]$, and~$b$ defined on~$[-1,1]\times[-1,0]$ by symmetry~\eqref{symmetry-b}.
	One can show that $\mt_{k,*} = \mt_k$ and that each $\mt_k$ is a
	connected component of $\Gamma$. Note that $0\in\mk$
	does not have a collision partner.\\
	The essential thing to show is that no element in~$\mt_k$ has a collision partner in $\mt_{k+l}$ for $l\geqslant 1$. This would require
	\[
	   \frac{1}{2k+1} < \frac{1}{2(k+l)} + \frac{1}{4(k+l)^2} \,.
	\]
	It is an easy computation to show that this can never hold.\\
	For each $k\geqslant 1$ Case (iii) of Remark \ref{rem-five-cases} applies, and the solution of the reversal collision dynamics converges to $\mu$ on $\mt_k$.
	By the compactness of $\mt_k$ the convergence is exponential, but with a $k$-dependent rate, which might degenerate as $k\to\infty$, depending on the choice of $b$.
	\end{rem}

\begin{rem}
Another example shows that without the condition $\mk=\mk_*$, the connected components, even in finite number, may not be compact. Again with $\ms = [-1,1]$ and~$x^\downarrow=-x$, we set~$\mk=\ms$, and
\[
	  b(x,x_*)=\begin{cases}|x-x_*|-1 &\mbox{if }|x-x_*|\geqslant1,\\
	  0 & \mbox{if }|x-x_*|\leqslant1.\end{cases}
	  \]

	One can show here that the connected components of~$\Gamma$ are~$[-1,0)$, $\{0\}$ and~$(0,1]$. Again,~$0$ does not have a collision partner, and the two other components are now in Case (ii) of Remark~\ref{rem-five-cases}. But we cannot expect an exponential rate of convergence, since the rate of interaction with particles located close to~$0$ degenerates.
	\end{rem}

	\begin{rem}
		We shall provide a last example, showing that the convergence rate $\lambda>0$ in \eqref{HT-diss} is not only depending on $\mathcal{K}$ and on the kernel $b$, but also on the invariant measure $\mu$. 
		
		We consider the motivating example \eqref{c4:REVT} with
		\[
		b(\vp,\vpa) = \left\{\begin{array}{ll} 1, & \text{for } d(\vp,\vpa)> \pi/2 \,,\\
		0, & \text{for } d(\vp,\vpa)\leqslant \pi/2 \,,\end{array}\right.
		\]
		and with the initial conditions 
		\[
			f_I = 2\alpha \delta_0 + 2\beta \delta_{2\pi/3} + 2\gamma \delta_{-2\pi/3} \,, \qquad \alpha+\beta+\gamma = \frac{1}{2} \,.
		\]
		The invariant measure therefore is given by
		\[
			\mu = \alpha \left(\delta_0 + \delta_{\pi}\right) + \beta \left(\delta_{2\pi/3}+\delta_{-\pi/3}\right)  + \gamma \left(\delta_{-2\pi/3}+\delta_{\pi/3}\right).
		\]
		We note that we are in case (v) of Remark \ref{rem-five-cases} with $\mk=\mathcal{T}=\mathcal{T}_*=\mathcal{T}^\downarrow=\mathcal{T}_*^\downarrow$. Due to the oddness of $h$ it is enough to investigate the dynamics on $\supp{f_I}$. Thus, we define by $h_{0}(t), h_{2\pi/3}(t), h_{-2\pi/3}(t)$ the coefficients of $h$ at the points $0, \frac{2\pi}{3}, -\frac{2\pi}{3}$. From \eqref{lin-eq-h} we obtain the linear system
		\begin{equation*}
			\frac{\md }{\md t} 
			\begin{pmatrix}
				h_0(t) \\
				h_{2\pi/3}(t) \\
				h_{-2\pi/3}(t)
			\end{pmatrix}
		= -2 
		\begin{pmatrix}
			\beta+\gamma & \beta & \gamma \\
			\alpha &\alpha+\gamma & \gamma \\
		    \alpha & \beta & \alpha+\beta
		\end{pmatrix}
		\begin{pmatrix}
			h_0(t) \\
			h_{2\pi/3}(t) \\
			h_{-2\pi/3}(t)
		\end{pmatrix}\,,
		\end{equation*}
	where the characteristic polynomial $p(\xi)$ of the coefficient matrix is given by 
	\[
		p(\xi) = \xi^3 + 2\xi^2 + 4\xi \left(\alpha^2 + \beta^2 +\gamma^2 + 2 (\alpha\beta+\alpha\gamma+\beta \gamma)\right) + 32 \alpha\beta\gamma \,.
	\]
	In the case where $\alpha \ll \beta, \gamma$, we get the following asymptotic behaviour for the eigenvalues:
	\begin{equation*}
		\xi_1 \approx -32 \alpha \beta \gamma \,,\qquad 
		\xi_{2,3} \approx -1 \,.
	\end{equation*}
	Hence also the convergence rate $\lambda$ in \eqref{HT-diss} degenerates as $\alpha \to 0$. 
	\end{rem}
	
	\section{Reversal collisions on the torus $\T$}\label{section-torus}
	
	This section is devoted to a slight generalization of the motivating example \eqref{c4:REVT} with $\ms = \T$ and with
	\begin{equation*}
		b(\vp,\vpa) = \begin{cases}
			1 & \text{if }\,\, d(\vp,\vpa)>\pi-\alpha, \\
			0 & \text{if }\,\, d(\vp,\vpa)\leqslant \pi - \alpha,
		\end{cases} \qquad 0<\alpha<\pi \,.
	\end{equation*}  
	The governing equation becomes 
	\begin{equation*}
		\pa_t f =  \int_{d(\vp,\vpa) >\pi-\alpha} \left(\fd \fad -f\fa \right) d\vpa \,,
	\end{equation*}
	equipped with initial conditions 
	\[
		f(\vp,0)=f_I(\vp) \,, \qquad \text{for } \vp \in \T \,,
	\]
	where~$f_I$ is a probability measure on~$\T$. The case $\alpha=\pi/2$ corresponds to the motivating example \eqref{c4:REVT}, discussed and simulated in the following Section \ref{c4:numerics}. 
	
	The immediate observation that due to $d(\vp,\vp^\downarrow)=\pi>\pi-\alpha$ for any $\vp \in \mk$ its opposite $\vp^\downarrow$ has to be a collision partner, implies $\mt^\downarrow \subset \mt_*$ for all connected components $\mt$ of $\Gamma$. Consequently, since $\mt^\downarrow$ and $\mt_*$ are also connected components,
	\[
		\mt^\downarrow = \mt_* \,.
	\]
	Thus, we can exclude the cases (i), (iii) and (iv) of Remark \ref{rem-five-cases}.
	
	Another crucial observation is that any pair $\vp, \psi \in \mk$ satisfies
	\begin{align}\label{torus_adj}
		d(\vp,\psi) < \alpha \quad \Rightarrow\quad \vp \longleftrightarrow \psi \,,
	\end{align}
	which can be seen easily since 
	\[
	\pi = d(\vp,\vp^\downarrow) = d(\vp,\psi)+d(\psi,\vp^\downarrow) < \alpha + d(\psi,\vp^\downarrow),
	\]
	which implies $d(\psi,\vp^\downarrow)>\pi-\alpha$. Therefore $\vp^\downarrow$ is a collision partner for both  $\vp$ and $\psi$.
	
	The property \eqref{torus_adj} allows us to characterize all the possible configurations. 
	
	\begin{proposition}\label{prop-configurations-torus}
	There is more than one connected component of~$\Gamma$ if and only if there exists an interval of the form~$(\psi,\psi+\alpha)$ entirely included in~$\ms\setminus\mk$. If this is the case, the connected components correspond to Case~(ii) of Remark~\ref{rem-five-cases}, their number is even and at most~$2\lfloor \pi/\alpha\rfloor$.
	
	If no such interval exists, there is only one connected component, corresponding to Case~(v) of Remark~\ref{rem-five-cases}.
	\end{proposition}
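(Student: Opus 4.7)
The plan is to prove the equivalence in both directions and then extract the structural information about the components. As a setup, the open set $\mk^c\subset\T$ is a disjoint countable union of open arcs, which I will call \emph{gaps}; their endpoints lie in $\mk$, and by $\mk=\mk^\downarrow$ (the symmetry of $\mu$) the gaps come in antipodal pairs. Call a gap \emph{big} if its length is at least $\alpha$, \emph{small} otherwise; the hypothesis ``no interval of the form $(\psi,\psi+\alpha)$ in $\ms\setminus\mk$'' is then precisely the absence of a big gap.

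For the implication ``no big gap $\Rightarrow\Gamma$ connected'', I fix $\vp_1\in\mk$, let $C$ be its $\Gamma$-component, and show $C=\mk$. Property~\eqref{torus_adj} together with the closedness of $\mk$ make $C$ both open and closed in $\mk$, hence compact in $\T$. If $C':=\mk\setminus C$ were nonempty, \eqref{torus_adj} would force $d(C,C')\geqslant\alpha$; picking $(\vp,\psi)\in C\times C'$ realizing this minimum, the shorter arc from $\vp$ to $\psi$ has length equal to $d(C,C')\geqslant\alpha$, and its interior must be disjoint from $\mk$, since any interior $\mk$-point would lie in $C$ or $C'$ and be strictly closer to the opposite set than the supposed minimum. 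This produces an open interval of length at least $\alpha$ inside $\mk^c$, contradicting the hypothesis.

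For the converse, suppose $(\psi,\psi+\alpha)\subset\mk^c$; by symmetry $(\psi^\downarrow,\psi^\downarrow+\alpha)\subset\mk^c$ as well. These two disjoint arcs carve out two complementary closed arcs $\bar A$ and $\bar A^\downarrow$ of length $\pi-\alpha$ each, with $\mk\subset\bar A\cup\bar A^\downarrow$; both $\mk\cap\bar A$ and $\mk\cap\bar A^\downarrow$ are nonempty (otherwise the symmetry of $\mk$ would force $\mk=\emptyset$). No edge of $\Gamma$ connects these two sets: a common collision partner $\vp_*\in\mk$ sits entirely in $\bar A$ or entirely in $\bar A^\downarrow$, which forces its distance to one of the two chosen points to be at most $\pi-\alpha$, violating the strict inequality $d>\pi-\alpha$ required by the kernel. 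Thus $\Gamma$ has at least two components; and applying the same argument to every big gap, each $\mt$ is contained in a single side $\bar A$ or $\bar A^\downarrow$ of each such pair. In particular $\mt\cap\mt^\downarrow=\emptyset$, so together with $\mt_*=\mt^\downarrow$ (noted at the start of this section) every component falls into Case~(ii) of Remark~\ref{rem-five-cases}.

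It remains to count the components. I claim they are in bijection with the \emph{blocks} obtained by removing all big gaps from $\T$. Within a single block, the points of $\mk$ are graph-connected: one walks along each $\mk$-subarc in steps of length $<\alpha$, and bridges each small gap (length $<\alpha$) via~\eqref{torus_adj} applied to its two $\mk$-endpoints, which sit within $\alpha$ of each other. Different blocks are separated by a big gap, so the previous paragraph rules out inter-block edges. Symmetry pairs the big gaps, so there are $2n$ of them producing $2n$ components; each opposite pair contributes total length at least $2\alpha$ to $\T$, so $2n\alpha\leqslant 2\pi$, and integrality of $n$ gives $n\leqslant\lfloor\pi/\alpha\rfloor$, whence at most $2\lfloor\pi/\alpha\rfloor$ components. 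I expect the subtlest point to be the minimality argument in the first implication: one must use the \emph{shorter} arc on the torus (whose length is $d(\vp,\psi)$ by definition) so that any interior $\mk$-point really is closer to the opposite set than the putative minimum.
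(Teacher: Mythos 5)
Your proof is correct in substance but follows a genuinely different route from the paper's. For the direction ``no interval of length $\alpha$ in $\ms\setminus\mk$ implies connectivity'', the paper exploits that a component in Case~(v) contains a path from $\vp$ to $\vp^\downarrow$ whose common collision partners, after reversal, produce a chain of points of $\mk$ with consecutive distances less than $\alpha$ covering the circle; you instead run a clopen-plus-compactness argument: the component $C$ of a point is relatively open and closed in $\mk$ by \eqref{torus_adj}, and a minimizing pair for $d(C,\mk\setminus C)\geqslant\alpha$ exhibits a forbidden open arc of length at least $\alpha$ in $\ms\setminus\mk$. This is arguably tighter than the paper's ``covering a half-circle'' step. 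For the converse and the count, the paper only argues that distinct components are separated by margins of width at least $\alpha$, of which at most $\lfloor\pi/\alpha\rfloor$ fit on each half-circle; you go further and identify the components exactly with the blocks between big gaps, which yields the evenness and the bound $2\lfloor\pi/\alpha\rfloor$ simultaneously and is a sharper statement than the paper establishes.

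Two places deserve tightening. First, the within-block connectivity via ``walking in steps of length $<\alpha$ and bridging each small gap'' is only a finite procedure when a block contains finitely many small gaps; if $\mk$ is Cantor-like inside a block you need a limiting argument --- for instance, the supremum of points of $\mk$ reachable from $x$ inside the block is itself reachable (since $\mk$ is closed and \eqref{torus_adj} applies to nearby reachable points) and cannot be interior to the block --- or you can simply re-run your own clopen argument relative to the block. Second, ``different blocks are separated by a big gap, so the previous paragraph rules out inter-block edges'' presumes that any two distinct blocks lie on opposite sides of some antipodal pair of big gaps; this is true (the antipodal involution acts on the cyclically ordered big gaps as the shift by half their number, so suitable separating pairs always exist), but it is not automatic from what you wrote. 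A cleaner route: if $x$ and $y$ in different blocks had a common partner $\vp_*$, then $\vp_*^\downarrow\in\mk$ is at distance less than $\alpha$ from both $x$ and $y$, and an arc of length less than $\alpha$ with endpoints outside a big gap cannot contain that gap, so $x$, $y$ and $\vp_*^\downarrow$ would all lie in the same block. Neither issue changes the architecture of your proof.
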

	
	\begin{proof}
	We first prove that if~$\mt$ is a connected component in Case~(v) of Remark~\ref{rem-five-cases}, then there is no such interval of the form~$(\psi,\psi+\alpha)$ entirely included in~$\ms\setminus\mk$.
	Indeed, for~$\vp\in\mt$, we have~$\vp^\downarrow \in\mt$, so we can pick a connecting path~$p=(\vp=\vp_0,\vp_1,\dots,\vp_n=\vp_0^\downarrow=\vp^\downarrow)$. Then for all~$0\leqslant k<n$ we have a common collision partner~$\psi_k$ of~$\vp_k$ and~$\vp_{k+1}$, that is to say~$d(\vp_k,\psi_k)>\pi-\alpha$, i.e.~$d(\vp_k,\psi_k^\downarrow)<\alpha$ (and similarly~$d(\vp_{k+1},\psi_k^\downarrow)<\alpha$). Thus, the sequence of points~$(\vp_0,\psi_0^\downarrow,\vp_1,\psi_1^\downarrow,\dots,\vp_n)$ is such that two consecutive points are at distance less than~$\alpha$, covering a half-circle  from~$\vp$ to~$\vp^\downarrow$ and, hence, excluding the occurrence of such an interval. Moreover, the sequence of points~$(\vp_0^\downarrow,\psi_0,\vp_1^\downarrow,\psi_1,\dots,\vp_n^\downarrow)$ covers the other half-circle and, thanks to the property~\eqref{torus_adj}, this implies that $\Gamma$ is connected and $\mt=\mt_*=\mt^\downarrow=\mt_*^\downarrow = \mk$. 
	
	Conversely, if~$\Gamma$ is connected, it is obviously in Case~(v) of remark~\ref{rem-five-cases}.
	If there are at least two connected components, they are therefore all in Case~(ii) of remark~\ref{rem-five-cases}. The property~\eqref{torus_adj} implies that between two different connected components there has to be a margin of width not less than $\alpha$. Between $\vp\in\mk$ and $\vp^\downarrow$ there can be at most $\lfloor \pi/\alpha\rfloor$ margins of width $\alpha$ on each half circle. This immediately implies that $\Gamma$ can have at most $2\lfloor \pi/\alpha\rfloor$ connected components. 
	\end{proof}
	
	In the limiting cases $\pi/\alpha\in\N$ the maximal number of connected components can only be reached by concentrating the mass at the $2\lfloor \pi/\alpha\rfloor$ vertices of a regular polygon, i.e.
	\[
		f_I = \sum_{i=1}^{2\pi/\alpha} \rho_i \delta_{\vp_0+i\alpha} \,, \qquad \text{with} \quad  \vp_0\in\T \,,\quad \rho_i\geqslant 0 \,.
	\]
	
	\section{Numerical simulations} \label{c4:numerics}

	This section is dedicated to illustrate the theoretical results of the previous sections with numerical simulations. We chose the setting of Section \ref{section-torus} with $\alpha = \frac{\pi}{2}$. We summarize the above considerations and the results of Proposition~\ref{prop-configurations-torus} for this special angle in the following list, characterizing the possible cases of number and properties of connected components of the graph~$\Gamma$.
	\begin{itemize}
			\item If there is no open interval of the form $(\psi,\psi+\frac{\pi}2)$ entirely included in~$\ms\setminus\mk$, then $\mk$ has only one connected component. Thus, the solution $f(t,\cdot)$ to~\eqref{c4:REVT} converges exponentially fast to the invariant measure $\mu$ as time goes to $\infty$. This corresponds to case~(v) of Remark~\ref{rem-five-cases}.
		\item Maximal four different connected components can occur. We have exactly four connected components of the graph $\Gamma$ \textit{iff} 
		\[
			\mk=\left\{\vp,\vp+\pi /2, \vp+\pi, \vp+3\pi/2\right\}, \quad  \text{for a} \quad \vp \in \T.
		\]
		In that case each of the isolated points is a connected component on which the solution~$f(t,\cdot)$ to \eqref{c4:REVT} is constant (case~(i) in Proposition~\ref{prop-decomposition}).
		\item If none of the two possibilities listed above is applicable, the graph $\Gamma$ has exactly two connected components, denoted by $\Gamma_+$ and $\Gamma_-$, on which the solution $f(t,\cdot)$ converges exponentially fast respectively to 
		\[
			2\rho_{\pm}\,\mu_{|_{\Gamma_{\pm}}}, \quad \text{as} \quad t \to \infty,
		\]
		where~$\rho_{\pm}=\int_{\Gamma_\pm}\md f_I$ (so~$\rho_++\rho_-=1$). This corresponds to Case~(ii) in Remark \ref{rem-five-cases}.
	\end{itemize}
	\paragraph{Discretization:}
	The results of the preceding section will be illustrated by numerical simulations for the simple problem \eqref{c4:REVT} on the torus with $b\equiv 1$.  Discretization is based on an equidistant grid
	\[
	\vp_k = \frac{(k-n)\pi}{n} \,,\qquad k = 0,\ldots,2n \,,
	\]
	with an even number of grid points, guaranteeing that the grid is invariant under the reversal collisions, i.e., with $\vp_k$ also $\vp_k^\downarrow=\vp_{k+n}$ is a grid point. 
	Solutions of \eqref{c4:REVT} are approximated at grid points by
	\[
	f^n(t) := (f_1(t),\ldots,f_{2n}(t)) \approx (f(\vp_1,t),\ldots,f(\vp_{2n},t)) \,,
	\]
	extended periodically by $f_{k+2n}(t) = f_k(t)$. This straightforwardly leads to the discrete model
	\begin{equation*}
		\frac{df_k}{dt} = Q_{REV}^n(f^n,f^n)_k \,,
	\end{equation*}
	with 
	\[
	Q_{REV}^n(f^n,f^n)_k := \frac{\pi}{n}\sum_{|k_*-k|>n/2} b_{k,k_*} (f_{k+n}f_{k_*+n} - f_k f_{k_*}) \,,
	\]
	and $b_{k,k_*}:=b(\vp_k,\vp_{k_*})$. 
	For the \emph{time discretization} the \emph{explicit Euler scheme} is used, such that the total mass is conserved by the discrete scheme, which has been implemented in \textsc{Matlab}. We simulated the first and third cases described at the beginning of this section. 
	
	\paragraph{The graph $\Gamma$ has one connected component:}
	Simulations have been carried out with grid-size $n=202$. Figure~\ref{c4:I15}, and the left part of Figure~\ref{c4:I12} show snapshots of the distribution function $f$ at different times together with the symmetric equilibrium $\mu$. In the left part of Figure~\ref{c4:I15decay} and the right part of~\ref{c4:I12}, the total mass~$\int_{\T} f \, d\vp =1$ as well as $\int_{-\pi}^0 f \, d\vp$ and $\int_0^{\pi} f \, d\vp$ are plotted against time. The right part of Figure~\ref{c4:I15decay} displays the $\log$-plot of $\h[f]$ belonging to the simulations of Figure~\ref{c4:I15}, which shows its exponential decay. \\
	In Figure~\ref{c4:I15} we started with asymmetric data, positive everywhere, which makes it clear that the associated graph $\Gamma$ has only one connected component and hence the solution converges to the symmetric equilibrium $\mu$. For this simulation the time-stepsize was chosen as $\Delta t=0.01$ for 1000 time-steps. \\
	Figure \ref{c4:I12} shows the evolution with positive initial conditions in the intervals~$(-3\pi/4,-\pi/4)$ and $(\pi/4,3\pi/4)$, although weighted differently. Furthermore, a perturbation in the interval~$[-\pi/4,\pi/4]$ was added, which serves as connecting point for the otherwise not connected graph. Also here convergence to the symmetric equilibrium $\mu$ can be observed. For this simulation the time-stepsize was chosen as $\Delta t=0.1$ for 5000 time-steps.
	
	\begin{figure}[h!]
		\centering
			\includegraphics[width=0.75\textwidth]{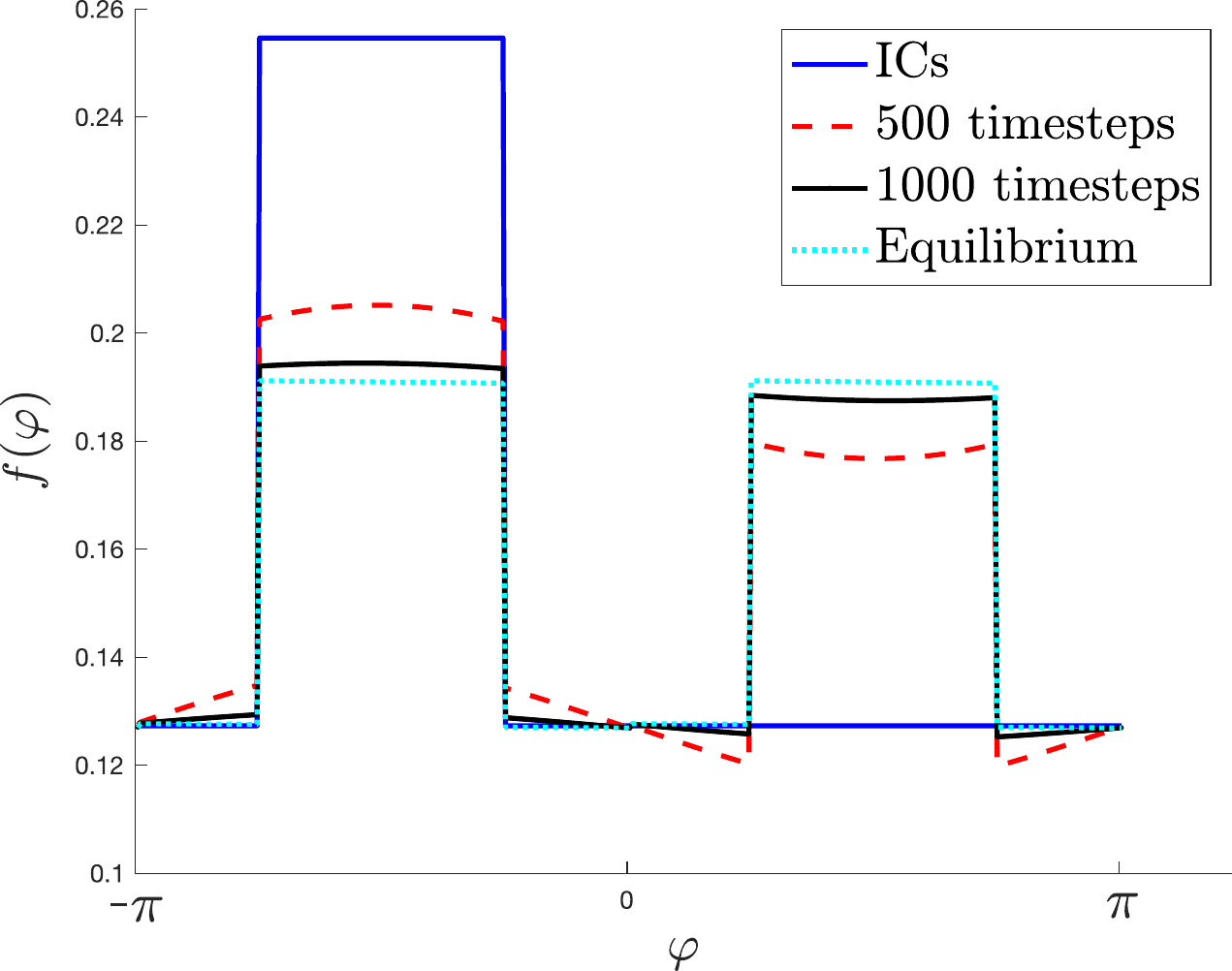}
		\caption{Initial conditions (solid dark blue) positive everywhere, $\Gamma$ has one connected component. Simulation: $f$ after 500 time-steps (dashed red), $f$ after 1000 time-steps (solid black) and $\mu$ (dotted light blue). }
		\label{c4:I15}
	\end{figure}
	
	\begin{figure}[h!]
		\begin{subfigure}{0.48\textwidth} 
			\includegraphics[width=\textwidth]{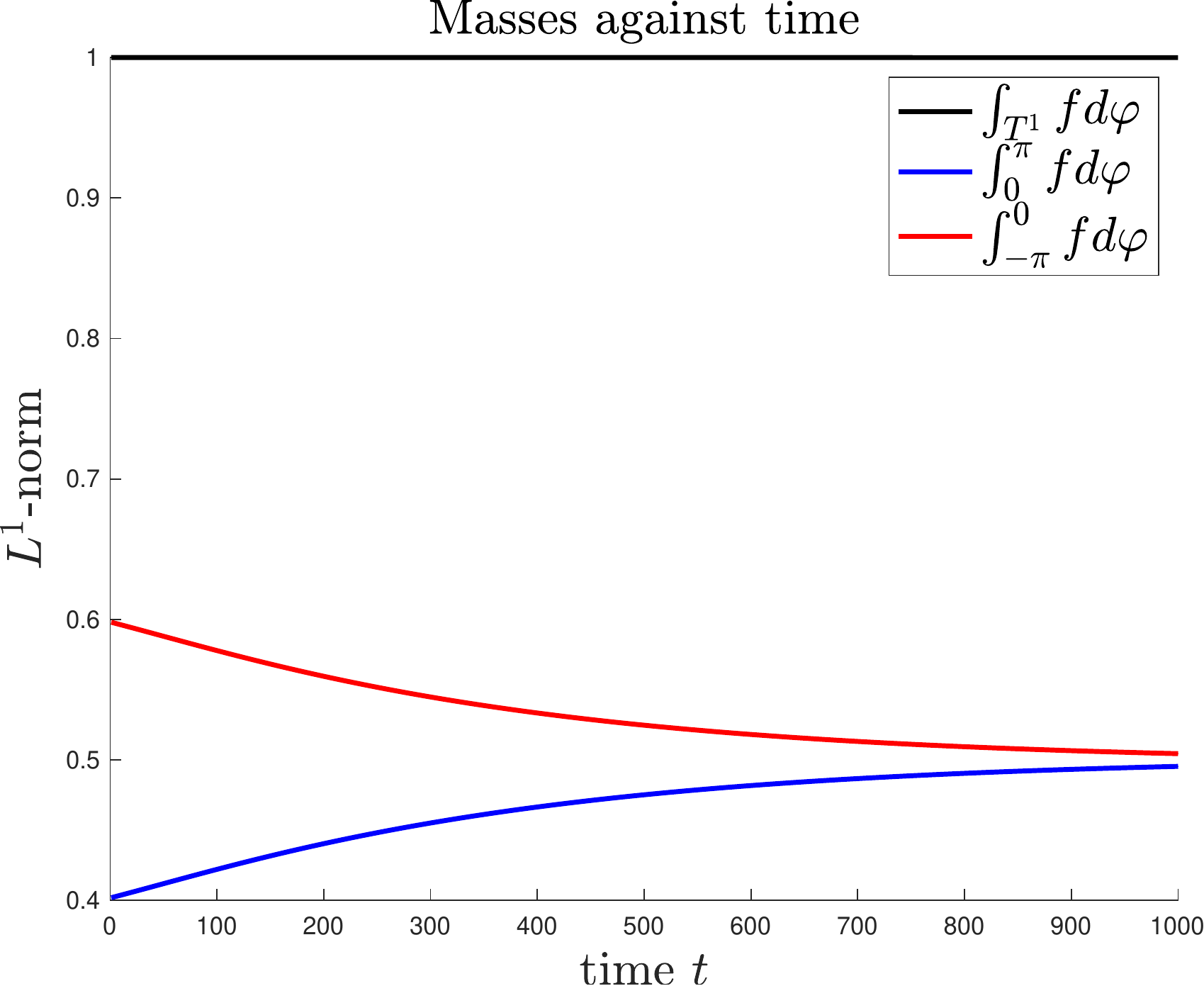}
		\end{subfigure}
		\hspace{1em}
		\begin{subfigure}{0.48\textwidth} 
			\includegraphics[width=\textwidth]{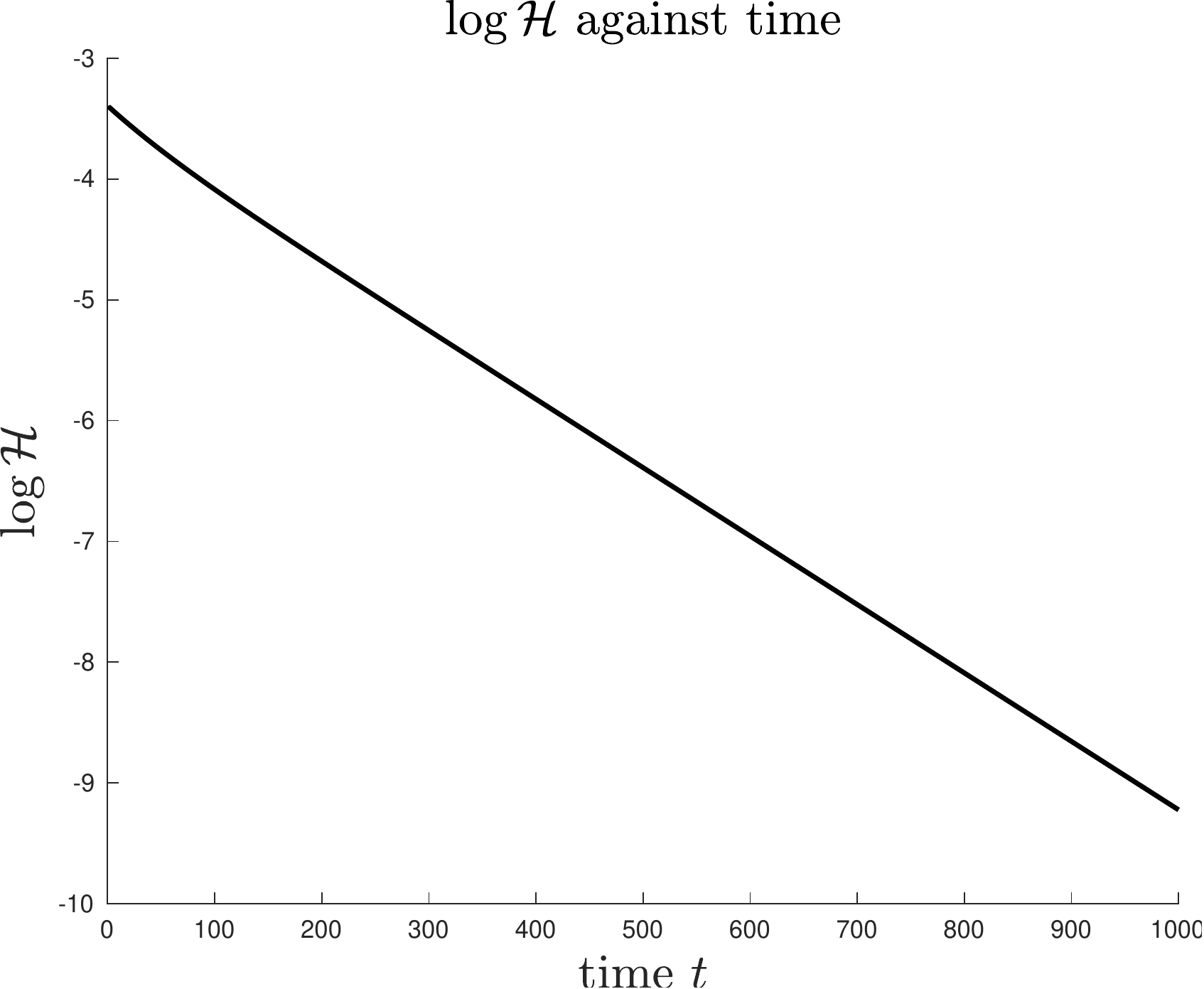}
		\end{subfigure}
		\caption{Time evolution of quantities corresponding to the simulations of Figure \ref{c4:I15}. \emph{Left:} Total mass conservation (black), masses of the positive (dark blue) and negative (red) part of the torus are different initially, but converge to the same value. \emph{Right:} $\log$-plot of the Lyapunov functional $\h$, showing exponential decay.}
		\label{c4:I15decay}
	\end{figure}
	
	\begin{figure}[h!]
		\centering
		\begin{subfigure}{0.48\textwidth} 
			\includegraphics[width=\textwidth]{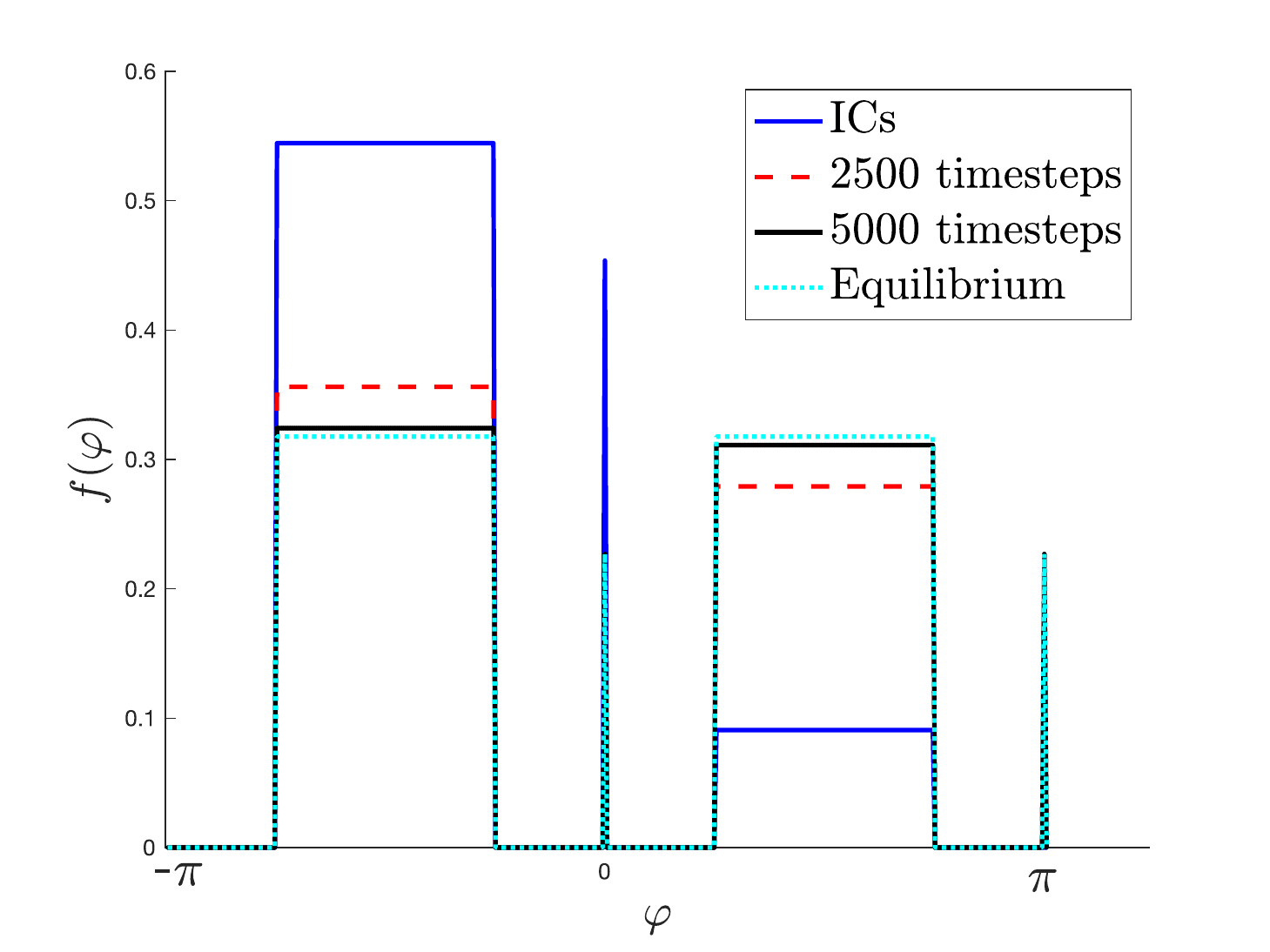}
		\end{subfigure}
		\hspace{1em}
		\begin{subfigure}{0.48\textwidth} 
			\includegraphics[width=\textwidth]{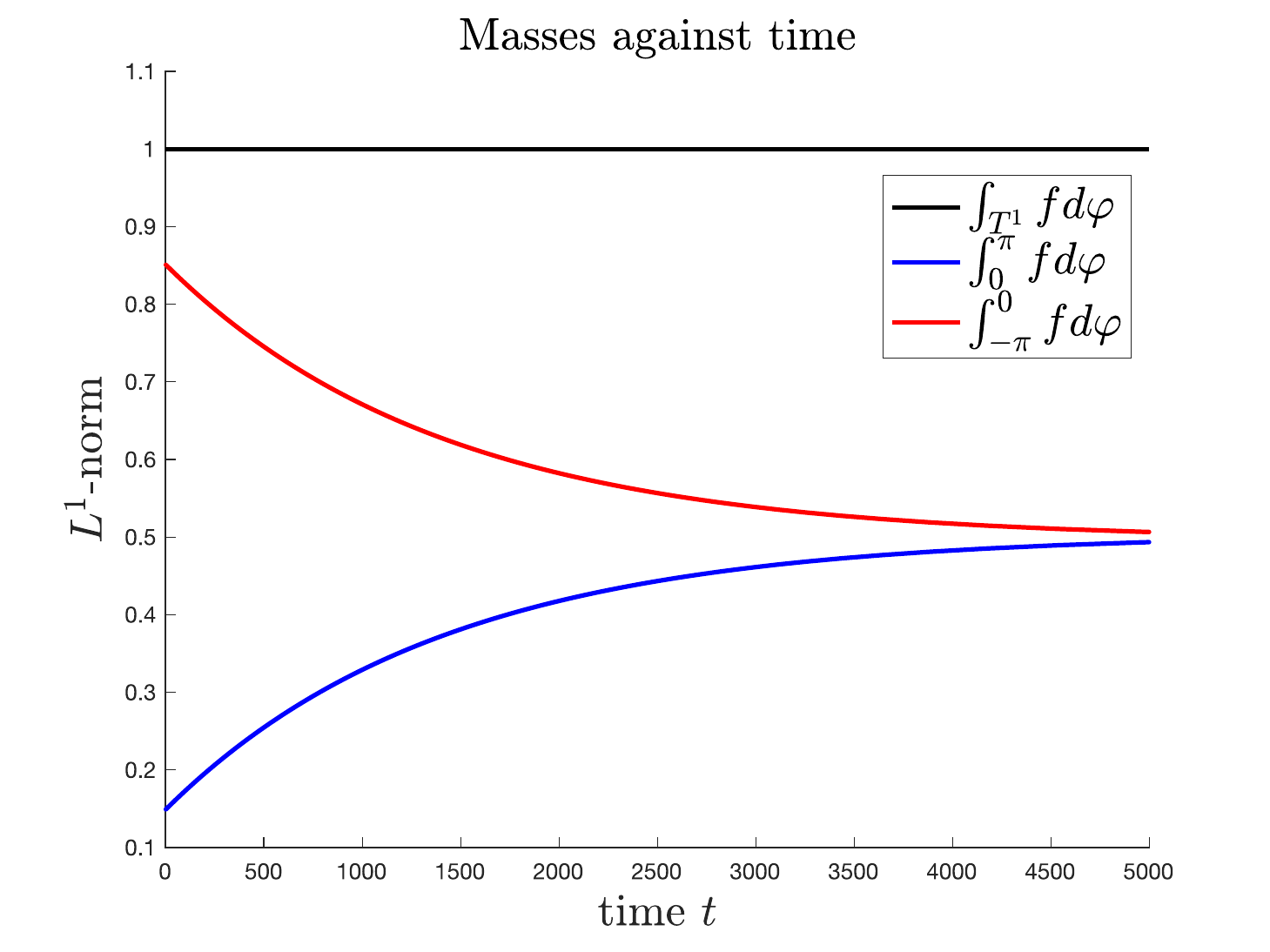}
		\end{subfigure}
		\caption{Initial conditions supported on $(-3\pi/4,-\pi/4)$ and $(\pi/4,3\pi/4)$, as well as in a very small interval contained in $(-\pi/4,\pi/4)$. $\Gamma$ has one connected component. \emph{Left:} Initial condition (solid dark blue), $f$ after 2500 time-steps (dashed red), $f$ after 5000 time-steps (solid black) and the equilibrium $f_\infty$ (dotted light blue). \emph{Right:} Total mass conservation (black), masses of the positive (dark blue) and negative (red) part of the torus, which are also conserved quantities. }
		\label{c4:I12}
	\end{figure}
	
	\paragraph{The graph $\Gamma$ has two connected components:}
	For the simulations corresponding to Figure \ref{c4:I14} initial data only positive in the intervals~$(-3\pi/4,-\pi/4)$ and $(\pi/4,3\pi/4)$ was chosen. This causes the graph $\Gamma$ to have two connected components $\Gamma_-$, supported in $(-\pi,0)$ and $\Gamma_+$, supported in $(0,\pi)$. The masses in the corresponding sets of vertices $\mathcal{V}_{\pm}$ were chosen different from each other. \\
	Again, the left part of Figure \ref{c4:I14} shows snapshots of the distribution function $f$ at different times together with the equilibrium 
	\[
	f_{\infty}(\vp) = 2\begin{cases} \mu(\vp) \int_{-\pi}^0 f(\tilde{\vp}) \, d \tilde{\vp}, \quad \vp \in (-\pi,0] \\  \mu(\vp) \int_0^{\pi} f(\tilde{\vp}) \, d \tilde{\vp}, \quad \vp \in (0,\pi].\end{cases}
	\]
	In the second row the total mass $\int_{\T} f \, d\vp =1$ as well as $\int_{-\pi}^0 f \, d\vp$ and $\int_0^{\pi} f \, d\vp$ are plotted against time, which shows mass conservation in $\mathcal{V}_{\pm}$. \\
	The simulation was carried out for $\Delta t=0.01$ and 250 time-steps.
	
	\begin{figure}[h!]
		\centering
		\begin{subfigure}{0.48\textwidth} 
			\includegraphics[width=\textwidth]{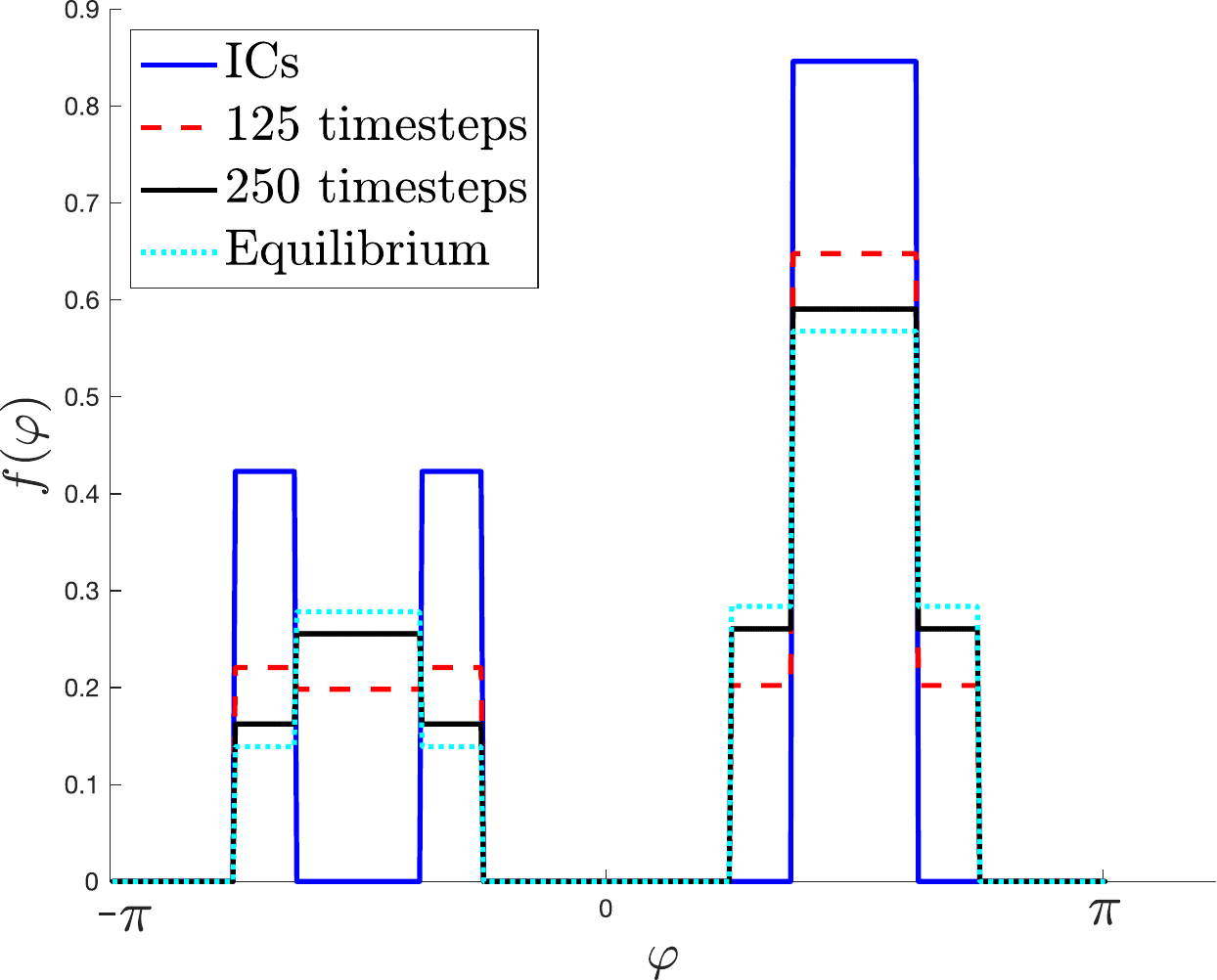}
		\end{subfigure}
		\hspace{1em}
		\begin{subfigure}{0.48\textwidth} 
			\includegraphics[width=\textwidth]{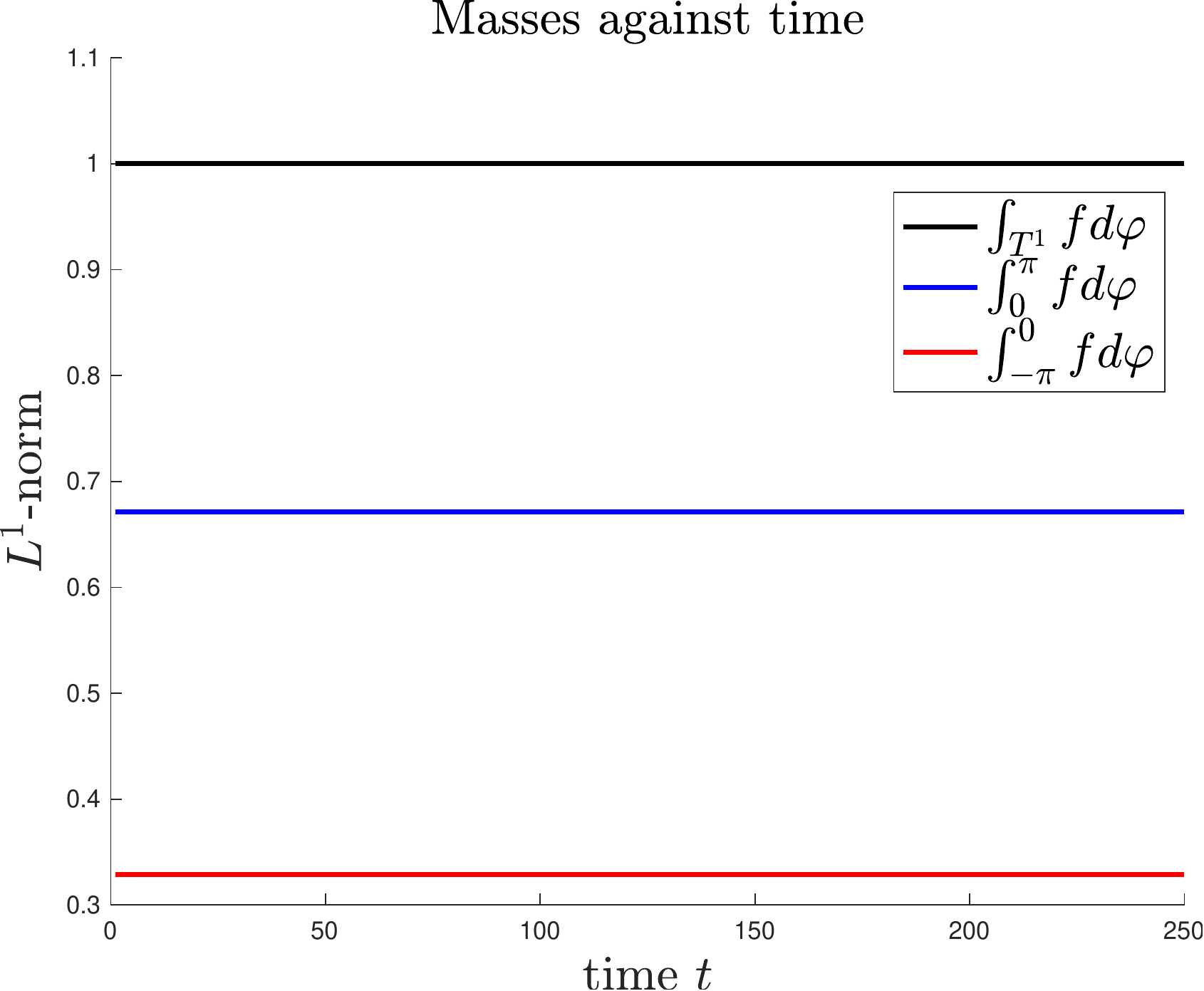}
		\end{subfigure}
		\caption{Initial conditions supported on $(-3\pi/4,-\pi/4)$ and $(\pi/4,3\pi/4)$, vacuum else. $\Gamma$ has two connected components $\Gamma_-$ with $\mathcal{V}_-\subset (-\pi,0)$ and $\Gamma_+$ with $\mathcal{V}_+\subset (0,\pi)$ . \emph{Left:} Initial condition (solid dark blue), $f$ after 125 time-steps (dashed red), $f$ after 250 time-steps (solid black) and the equilibrium (dotted light blue). \emph{Right:} Total mass conservation (black), masses of the positive (dark blue) and negative (red) part of the torus, which are different initially, but converge to the same value.  }
		\label{c4:I14}
	\end{figure}
	
	\appendix
	
	\section{Well-posedness in Wasserstein distance}\label{appendix-wasserstein}
	
	We recall that the Wasserstein-1 distance between two probability measures~$\mu$ and~$\nu$ on the compact metric space~$\ms$ is given, thanks to the Kantorovich-Rubinstein duality \cite{V}, by
	\begin{equation}
		W_1(\mu,\nu)=\sup\Big(\int_{\ms}\psi\,\md \mu-\int_{\ms}\psi\,\md \nu\Big),\label{defW1}
	\end{equation}
	where the supremum is taken over all 1-Lipschitz functions~$\psi$. And in our case of a compact space~$\ms$ the topology given by the Wasserstein-1 distance on~$\mathbb{P}(\ms)$ (the set of probability measures on~$\ms$) corresponds to the topology of weak convergence of measures. 
	
	\begin{proposition} We suppose that the collision kernel~$b$ is Lipschitz with a Lipschitz coefficient~$\lambda>0$. We denote by~$L$ a bound on the diameter of~$\ms$ and by~$M$ a bound on the collision kernel~$b$. Then, if~$f$ and~$\tf$ are two solutions to the reversal collision dynamics with respective initial conditions~$f_I$ and~$\tf_I$, we have the following global stability estimate with respect to the initial conditions:
		\begin{equation*}
			\forall t\geqslant0,\quad W_1(f(t,\cdot),\tf(t,\cdot))\leqslant e^{\lambda Lt}\,C(t)\,W_1(f_I,\tf_I),
		\end{equation*}
		where the coefficient~$C(t)$ is explicitly given by~$C(t)=1+(M+5\lambda L)t+2t^2\lambda ML$.
	\end{proposition}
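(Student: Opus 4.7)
The strategy is Kantorovich--Rubinstein duality~\eqref{defW1}: for each $1$-Lipschitz test function $\psi$ (normalised so that $\|\psi\|_\infty\leqslant L$), I would derive an integral inequality for $u(t):=\int\psi\,d(f(t)-\tf(t))$ and then pass to the supremum over $\psi$. Differentiating $u$ via the weak formulation of~\eqref{eq-rev-coll-measure} gives, by bilinearity of the collision operator, the decomposition $u'=I+II$ with
\begin{equation*}
I=\iint b\,(\psi^\downarrow-\psi)(x)\,d(f-\tf)(x)\,df_*(x_*),\qquad II=\iint b\,(\psi^\downarrow-\psi)(x)\,d\tf(x)\,d(f-\tf)_*(x_*).
\end{equation*}

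For $II$, integrating first against $d\tf$ produces the function $g(x_*):=\int b(x,x_*)(\psi^\downarrow-\psi)(x)\,d\tf(x)$, which is $\lambda L$-Lipschitz in $x_*$: this uses $|\psi^\downarrow-\psi|\leqslant L$, the $\lambda$-Lipschitz dependence of $b$ on its second argument, and that $\tf$ is a probability measure. Kantorovich duality then yields $|II|\leqslant \lambda L\,W_1(f,\tf)$. This is the only term I intend to feed into the Grönwall exponent, producing the desired factor $e^{\lambda L t}$.

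For $I$, a direct Lipschitz estimate of $(\psi^\downarrow-\psi)\,\gamma[f]$ (with $\gamma[\nu](x):=\int b(x,x_*)\,d\nu_*$) would couple $W_1(f,\tf)$ to an $M$-dependent constant, spoiling the exponent. The key algebraic trick is to exploit conservation of $\mu$ together with the reversal symmetry $b(x,x_*)=b(x^\downarrow,x_*^\downarrow)$: substituting $x\mapsto x^\downarrow$ in $\int\psi^\downarrow\gamma[f]\,d(f-\tf)$ and using $f^\downarrow=2\mu-f$, one obtains
\begin{equation*}
I=-2\int\psi\,\gamma[\mu]\,d(f-\tf)+2\int\psi\,\gamma[f^\downarrow]\,d(\mu-\tilde{\mu}).
\end{equation*}
The first integrand pairs the time-independent $\gamma[\mu]$ against $f-\tf$, while the second is a pure source involving $\mu-\tilde{\mu}$; both products are $(M+\lambda L)$-Lipschitz. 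Because the involution is an isometry in all relevant applications, the convexity of $W_1$ provides the crucial time-independent bound $W_1(\mu,\tilde{\mu})\leqslant W_1(f_I,\tf_I)$, turning the second piece into a source controlled by the initial distance.

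Assembling everything and integrating in time yields an integral inequality $u(t)\leqslant u(0)+\lambda L\int_0^t W_1(f,\tf)\,ds+\mathcal R(t)$, where $\mathcal R$ collects the remaining $M$-dependent terms together with the $W_1(\mu,\tilde{\mu})$ source. Applying Grönwall and carefully iterating the resulting inequality one additional time (feeding a rough preliminary bound back into the source-like parts of $\mathcal R$) produces the stated $e^{\lambda L t}\,C(t)$; the precise coefficients $M+5\lambda L$ and $2\lambda ML$ in $C(t)$ arise from tracking this single Picard iteration on the source terms. \emph{The main obstacle} is precisely avoiding a naive one-shot Grönwall that would only give the much weaker bound $e^{(2M+3\lambda L)t}W_1(f_I,\tf_I)$: this requires both the algebraic identity for $I$ (which replaces $\gamma[f]$ by the time-independent $\gamma[\mu]$, so that the $W_1(\mu,\tilde{\mu})$ source can be extracted) and the subsequent iteration to isolate the clean $\lambda L$ coefficient in the exponent.
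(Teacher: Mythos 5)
Your decomposition of $I$ via the reversal symmetry and the conservation of $\mu$ is correct (and is a nice way of seeing why only $\mu$-dependent quantities appear as sources), as are the bound $|II|\leqslant\lambda L\,W_1(f,\tf)$ and the estimate $W_1(\mu,\tilde{\mu})\leqslant W_1(f_I,\tf_I)$; all three have exact counterparts in the paper's proof. But the argument breaks at the very step you flag as the main obstacle: the term $-2\int\psi\,\gamma[\mu]\,\md(f-\tf)$ is not a source, it is a genuine feedback term, bounded only by $2(M+\lambda L)\,W_1(f(s),\tf(s))$ at each time $s$ (the function $\psi\,\gamma[\mu]$ is $(M+\lambda L)$-Lipschitz, nothing better). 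Feeding a preliminary bound $W_1(f(s),\tf(s))\leqslant e^{Ks}W_1(f_I,\tf_I)$, with $K$ necessarily containing $M$, back into this term produces $e^{Kt}$ inside your remainder $\mathcal{R}(t)$, and no finite number of such iterations converts that into a polynomial prefactor --- you would have to sum the entire Picard series. Nor can you exploit the negative sign of this term at the level of the scalar $u(t)$, because $\gamma[\mu](x)$ depends on $x$: the damping acts pointwise on the measure, not on $u$.

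The paper's resolution is precisely to exploit this pointwise damping by passing to the mild (Duhamel) formulation: the solution is a fixed point of a map whose action on test functions reads $\int e^{-2tB_\mu}\psi\,\md f_I+\int_0^t\int e^{-2(t-s)B_\mu}(\cdots)\,\psi\,\md\mu\,\md s$, so that the loss rate $2B_\mu=2\gamma[\mu]$ is integrated exactly into the factor $e^{-2tB_\mu(x)}\leqslant1$. The only price is that $e^{-2tB_\mu}\psi$ has Lipschitz constant $1+2t\lambda L$, which is where the polynomial $C(t)$ comes from, and the only term still coupling to $W_1(f(s),\tf(s))$ is the analogue of your $II$, with the clean coefficient $\lambda L$. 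To repair your proof you would have to replace the direct differentiation of $u$ by this fixed-point representation (equivalently, an integrating-factor argument at the level of $h$ with $f=(1+h)\mu$). As written, your scheme yields only a bound of the form $e^{(2M+3\lambda L)t}W_1(f_I,\tf_I)$ --- exactly the estimate you set out to avoid --- and the claimed derivation of the coefficients $M+5\lambda L$ and $2\lambda ML$ is unsupported.
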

	
	\begin{proof}
		Since~$f$ is a solution, thanks to Theorem~\ref{thm-existence-uniqueness} it is of the form $(1+h)\mu$ with~$\mu=\frac12(f_I+f_I^\downarrow)$ and~$h\in C([0,\infty),L^\infty(\mu))$. Therefore~$f$ belongs to~$C([0,\infty),\mathbb{P}(\ms))$. Using the fact that~$h$ is a fixed point of the mild formulation~\eqref{eq-fixed-point}, we get that $f$ is a fixed point of the map~$\Psi_{f_I}$, where~$\Psi_{f_I}(f)$ is given by the following formula, given for all~$\psi\in C(\ms)$ and~$t\in[0,\infty)$:
		\begin{equation*}
			\int_{\ms}\psi(x)\md \Psi_{f_I}(f)(t,x)=\int_{\ms}e^{-2tB_\mu(x)}\psi(x)\md f_I(x)+\int_0^t\int_{\ms}e^{-2(t-s)B_\mu(x)}B_{f(s,\cdot)}(x)\psi(x)\md \mu(x)\md s,
		\end{equation*}
		where, for any~$\nu\in\mathbb{P}(\ms)$, we write~$B_{\nu}(x)=\int_{\ms}b(x,x_*)\md \nu_*$. This gives a definition of~$\Psi_{f_I}(f)$ as an element of~$C([0,\infty),\mathbb{P}(\ms))$, by Riesz-Markov-Kakutani representation theorem.
		
		We start by proving the following estimate:
		\begin{equation}
			W_1(\Psi_{f_I}(f),\Psi_{\tf_I}(\tf))\leqslant\int_0^t\lambda LW_1(f(s,\cdot),\tf(s,\cdot))\md s +C(t) \,W_1(f_I,\tf_I).\label{estW1W1}
		\end{equation}
		
		We notice that in the definition~\eqref{defW1} of the Wasserstein distance, if we fix~$x_0\in\ms$, we can restrict the supremum over functions~$\psi$ which are~$1$-Lipschitz and such that~$\Psi(x_0)=0$. From now on we fix such a~$x_0$ and~$\psi$ and want to estimate, at a fixed time~$t>0$, the quantity
		\begin{equation}
			\int_{\ms}\psi(x)\md \Psi_{f_I}(f)(t,x)-\int_{\ms}\psi(x)\md \Psi_{\tf_I}(\tf)(t,x):=A_1+A_2+\int_0^t(A_3(s)+A_4(s)+A_5(s))\md s,\label{splitEstimate}
		\end{equation}
		where we have split it thanks to the five following expressions:
		\begin{align*}
			A_1&=\int_{\ms}e^{-2tB_\mu(x)}\psi(x)\md f_I(x)-\int_{\ms}e^{-2tB_\mu(x)}\psi(x)\md \tf_I(x),\\
			A_2&=\int_{\ms}\big(e^{-2tB_{\mu}(x)}-e^{-2tB_{\tilde{\mu}}(x)}\big)\psi(x)\md \tf_I(x),\\
			A_3(s)&=\int_{\ms}e^{-2(t-s)B_\mu(x)}B_{f(s,\cdot)}(x)\psi(x)\md \mu(x)-\int_{\ms}e^{-2(t-s)B_\mu(x)}B_{f(s,\cdot)}(x)\psi(x)\md \tilde{\mu}(x),\\
			A_4(s)&=\int_{\ms}e^{-2(t-s)B_\mu(x)}(B_{f(s,\cdot)}(x)-B_{\tf(s,\cdot)}(x))\psi(x)\md \tilde{\mu}(x)),\\
			A_5(s)&=\int_{\ms}\big(e^{-2(t-s)B_{\mu}(x)}-e^{-2(t-s)B_{\tilde{\mu}}(x)}\big)B_{\tf(s,\cdot)}(x)\psi(x)\md \tilde{\mu}(x).
		\end{align*}
		
		We first notice that since~$b$ is~$\lambda$-Lipschitz, then for all~$\nu\in\mathbb{P}(\ms)$,~$B_\nu$ is also~$\lambda$-Lipschitz. Then since~$b$ is bounded by~$M$,~$B_\nu$ is also bounded by~$M$. And finally we have that~$|\psi|$ is bounded by~$L$ since~$|\psi(x)|=|\psi(x)-\psi(x_0)|\leqslant d(x,x_0)$. Therefore we have for~$x,y\in\ms$
		\begin{align*}
			e^{-2tB_\mu(x)}\psi(x)-e^{-2tB_\mu(y)}\psi(y)&=(e^{-2tB_\mu(x)}-e^{-2tB_\mu(y)})\psi(x)+e^{-2tB_\mu(y)}(\psi(x)-\psi(y))\\
			&\leqslant2t\lambda d(x,y)L+d(x,y),
		\end{align*}
		Therefore the function~$x\mapsto e^{-2tB_\mu(x)}\psi(x)$ is~$(1+2t\lambda L)$-Lipschitz, and this provides the estimate
		\begin{equation*}
			A_1\leqslant(1+2t\lambda L)W_1(f_I,\tf_I).
		\end{equation*}
		Similarly, the function~$x\mapsto e^{-2(t-s)B_\mu(x)}B_{f(s,\cdot)}(x)\psi(x)$ is~$(M+\lambda L+2(t-s)\lambda ML)$-Lipschitz, and we obtain
		\begin{equation*}
			A_3(s)\leqslant(M+\lambda L+2(t-s)\lambda ML)W_1(\mu,\tilde{\mu}).
		\end{equation*}
		
		Furthermore, still thanks to the fact that~$b$ is~$\lambda$-Lipschitz, we have for all~$\nu,\tilde{\nu}\in\mathbb{P}(\ms)$:
		\begin{equation*}
			B_{\nu}(x)-B_{\tilde{\nu}}(x)\leqslant\lambda W_1(\nu,\tilde{\nu}).
		\end{equation*}
		Therefore this gives the estimates
		\begin{align*}
			A_2&\leqslant2t\lambda W_1(\mu,\tilde{\mu})L,\\
			A_4(s)&\leqslant\lambda LW_1(f(s,\cdot),\tf(s,\cdot),\\
			A_5(s)&\leqslant2(t-s)\lambda W_1(\mu,\tilde{\mu})ML.
		\end{align*}
		
		Since for any~$1$-Lipschitz function~$\psi$, the function~$\frac12(\psi+\psi^\downarrow)$ is also~$1$-Lipschitz, we get that 
		\begin{equation*}\int_{\ms}\psi\md \mu-\int_{\ms}\psi\md \tilde{\mu}=\int_{\ms}\frac12(\psi+\psi^\downarrow)\md f_I-\int_{\ms}\frac12(\psi+\psi^\downarrow)\md f_I.
		\end{equation*}
		Therefore we obtain~$W_1(\mu,\tilde{\mu})\leqslant W_1(f_I,\tf_I)$. Thanks to these estimates, we obtain that
		\begin{align*}
			A_1&+A_2+\int_0^t(A_3(s)+A_5(s))\md s\\
			&\leqslant(1+2t\lambda L+2t\lambda L+(M+\lambda L)t+t^2\lambda ML+t^2\lambda ML)W_1(f_I,\tf_I)=C(t)W_1(f_I,\tf_I).
		\end{align*}
		Therefore the expression given by~\eqref{splitEstimate} is bounded by the right-hand side of the inequality~\eqref{estW1W1}. Since this is true for all~$1$-Lipschitz function~$\psi$ such that~$\psi(x_0)=0$, this gives the inequality~\eqref{estW1W1}.
		
		Finally, since~$\Psi_{f_I}(f)=f$ and~$\Psi_{\tf_I}(\tf)=\tf$, the inequality~\eqref{estW1W1} becomes an integral Grönwall estimate, which gives the final result.
		
		Notice that the inequality~\eqref{estW1W1} could also be used to directly prove existence of a fixed point of~$\Psi_{f_I}$ (and thus a solution) in the space~$C([0,T],\mathbb{P}(\ms))$, restricted to time dependent probability measures~$f$ such their symmetric part is constant equal to~$\mu$, in the same manner as for the proof of Theorem~\ref{thm-existence-uniqueness}.
	\end{proof}

\end{document}